\title{Производящие семейства на Арбореллевских графах}
\author{Иван Яковлев\footnote{Работа выполнена при поддержке  Лаборатории зеркальной симметрии НИУ ВШЭ, грант Правительства РФ Договор  14.641.31.0001}}
\begin{document}
\newtheorem{prop}{Proposition}
\newtheorem{mydef}{Definition}
\newtheorem{conj}{Conjecture}
\newtheorem{comm}{Comment}
\newtheorem{data}{Symbol}
\newtheorem{ex}{Example} 
\newtheorem{question}{Question}
\newtheorem{prof}{Proof}
\newtheorem{cons}{Consequence}
\newtheorem{const}{Construction}
\newtheorem{res}{Result}
\newtheorem{lem}{Lemma}
\newtheorem{theorem}{Theorem}

\maketitle
\begin{abstract}
    The paper is devoted to the study of exact curves on Arborealized Liouville surfaces. We introduce the notion of a generating family for such curves. Our main statement is a hamiltonian lifting property: the set of curves admitting a generating family is closed with respect to Hamiltonian isotopes. \\
    This is part of the author's future thesis. It will be translated into English within a few weeks. In the future, we plan to generalize our results in several directions and find applications.\\
    Keywords: Symplectic surfaces, Generating families, Arboreal spaces.\\
    \newline
    Работа посвящена изучению точных кривых на Арбореализованных Лиувиллевых поверхностях. Мы вводим понятие производящего семейства для таких кривых. Наше основное утверждение это свойство гамильтонова подъема: множество кривых, допускающих производящее семейство, замкнуто относительно гамильтоновых изотопий. \\
    Это часть будущей диссертации автора. В течении нескольких недель она будет переведена на английский язык. В дальнейшем, мы планируем обобщить наши результаты в нескольких направлениях и найти приложения.\\
    Ключевые слова: Симплектические поверхности, Производящие семейства, Арбореллевские пространства.
\end{abstract}
\section{Введение}
Кокасательные расслоения выделены среди других объектов симплектической топологии.\\
В частности, для них развита техника производящих семейств, позволяющая строить Лагранжевы подмногообразия и изучать их в терминах гладкой геометрии базы. В этой статье мы обобщаем ее на некомпактные симплектические поверхности, а в будущем планируем разобрать случай  Арбореализуемых Лиувиллевых многообразий произвольной размерности. 

Поверхности, с которыми мы работаем, можно рассматривать как обобщенные кокасательные расслоения от некоторых графов. \textbf{Арбореллевским графом} мы называем ленточный граф, валентность вершин которого не превышает трех, на котором для каждой вершины $v$ валентности $3$ зафиксирована ее \textbf{"ножка"{}}, то есть одно из инцидентных ей ребер.\\
Стартуя с Арбореллевского графа мы строим поверхность, в которую граф вкладывается так, что поверхность ретракируется на образ вложения. Она склеивается из стандартных плиток. \\
\begin{center}
\includegraphics[width=6cm]{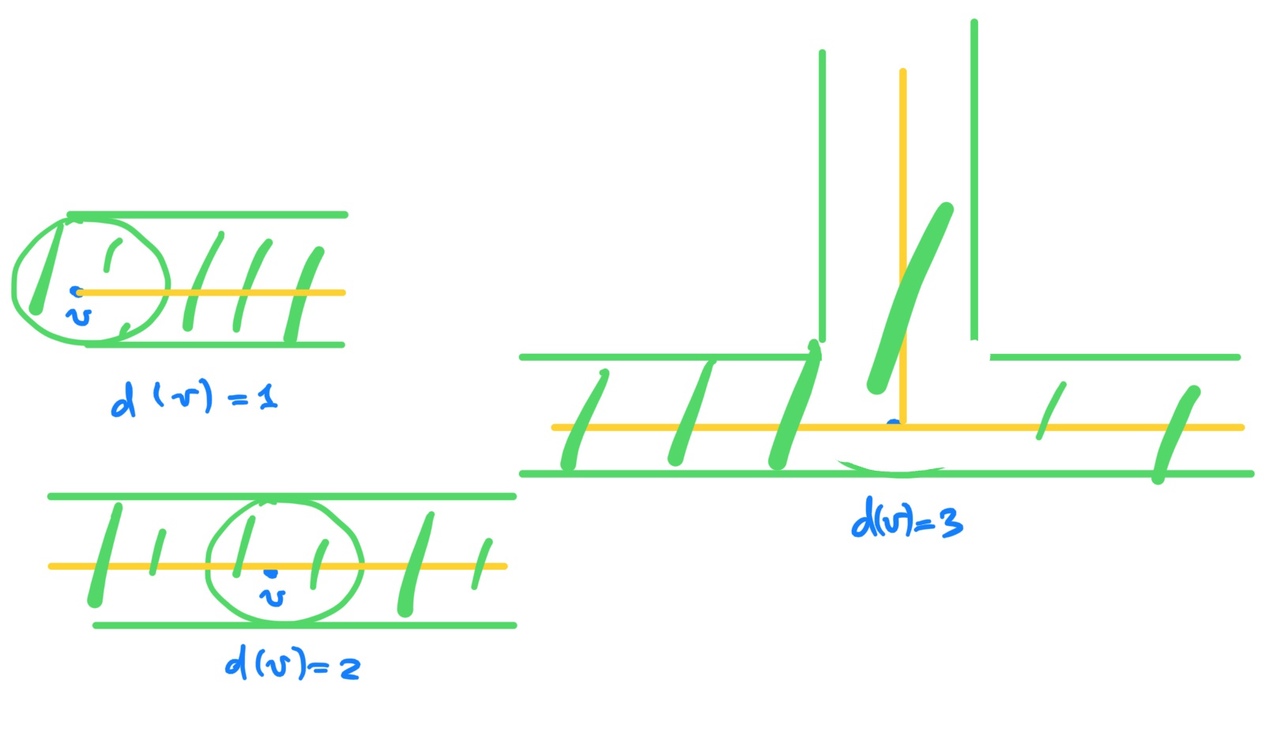}
\end{center}
На поверхности $S(\mathcal{T})$ задана каноническая симплектическая форма. Мы рассматриваем это симплектическое многообразие как аналог кокасательного расслоения от графа $\mathcal{T}$. Эта частный случай конструкции, которая изучалась в статье~\cite{Arborealization3} для Арбореллевских пространств произвольной размерности. Мы подробно описываем ее для графов в обзорном разделе~\ref{sec:surface}. 

Нас интересуют точные кривые на поверхностях $S(\mathcal{T})$. Если мы работаем на плоскости или на цилиндре, хорошим источником кривых являются производящие семейства. В общем случае, поверхность $S(\mathcal{T})$ допускает покрытие картами $U_v$, пронумерованными вершинами графа $\mathcal{T}$. На каждой из них задан выделенный симплектоморфизм с кокасательным пространством от интервала $I_v$, полученного объединением ребер, инцидентных вершине $v$ и отличных от ее шляпки. Используя это покрытие, мы сопоставим каждой кривой на $\mathbb{S}(\mathcal{T})$ набор кривых в кокасательных пространствах к ребрам $\mathcal{T}$. Скажем, что кривая \textbf{локально допускает производящие семейства}, если это верно для ее пересечений с любой картой. Мы обсуждаем эти определения в разделе~\ref{sec:curves}.

Чтобы глобализовать это определение, нам необходимо наложить условия согласованности на производящие семейства для соседних карт $U_v$ и $U_w$. Если общее ребро вершин $v$ и $w$ не является ножкой ни для одной из них, это сделать просто: достаточно потребовать, чтобы ограничения этих семейств на интервал ребра были стабильно эквивалентны. Если же $e$ является ножкой, для определения ограничения приходится вводить оператор поворота $\circlearrowleft$. В разделе~\ref{sec:chekanov} мы приведем общую конструкцию, позволяющую получить этот оператор и проверить его свойства. Это главное техническое место статьи. \\
В последнем разделе~\ref{sec:main} мы наконец определим производящие семейства на Арбореллевских графах. Главным инструментом в работе с производящими семействами является свойство гамильтонова подъема. Мы доказываем его аналог для Арбореллевских графов.\\
Основной результат этой статьи это следующее утверждение:
\begin{theorem}
\label{theorem:main}
Рассмотрим кривую $\gamma\subset \Sigma(\mathcal{T})$, допускающую производящее семейство и 
\begin{equation*}
    \psi\in Ham(\Sigma(\mathcal{T})).
\end{equation*}
Ее образ при Гамильтоновой изотопии $\psi(\gamma)$ также допускает производящее семейство. 
\end{theorem}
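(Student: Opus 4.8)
The plan is to mimic the classical proof of the Hamiltonian lifting property for generating families on cotangent bundles, promoting it to the arboreal setting chart by chart. Concretely, given $\gamma \subset \Sigma(\mathcal{T})$ with a generating family and $\psi = \psi_1$ the time-one map of a Hamiltonian isotopy $\psi_t$ generated by $H_t \colon \Sigma(\mathcal{T}) \to \mathbb{R}$, I would first reduce to the case of a \emph{small} isotopy: since $Ham(\Sigma(\mathcal{T}))$ is connected and the set of curves admitting a generating family is (by the statement we are proving, used inductively along the isotopy) what we want to be invariant, it suffices to prove the claim for $\psi_t$ with $t \in [0,\varepsilon]$ and then iterate. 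For a small isotopy the image $\psi_t(\gamma)$ stays $C^1$-close to $\gamma$, so on each chart $U_v \cong T^*I_v$ the curve $\psi_t(\gamma) \cap U_v$ is still a section-like curve to which the local theory of Section~\ref{sec:curves} applies.

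The core of the argument is the local Hamiltonian lifting on a single cotangent chart $T^*I_v$: if a curve in $T^*I_v$ has a generating family $f \colon I_v \times \mathbb{R}^N \to \mathbb{R}$, then its image under the Hamiltonian flow of $H_t$ again has a generating family, obtained by the standard ``broken geodesic''/Chekanov composition formula --- one chops $[0,t]$ into small intervals, writes the flow as a composition of near-identity maps each of which is a generating-family-type map with quadratic-in-fibre generating function, and glues these auxiliary fibre variables onto $f$ by fibrewise sum (the Chekanov--Sikorav composition of generating families). This is precisely where the rotation operator $\circlearrowleft$ from Section~\ref{sec:chekanov} enters: when the common edge $e$ of adjacent charts $U_v, U_w$ is a leg of one of them, the two local generating families are compared not by plain restriction but after applying $\circlearrowleft$, and I must check that the local Chekanov composition I just performed \emph{commutes} with $\circlearrowleft$ up to stable equivalence --- i.e. that the new fibre variables introduced over $U_v$ and over $U_w$ can be matched over $e$ after rotation. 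The functoriality and naturality properties of $\circlearrowleft$ established in Section~\ref{sec:chekanov} are exactly what make this possible; this compatibility check is the main obstacle and the technical heart of the proof.

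Having the local data compatible, I assemble the global generating family for $\psi(\gamma)$: over each $U_v$ take $f_v \#_{\mathrm{fib}} (\text{auxiliary quadratic forms from the flow})$, and use the commutation with $\circlearrowleft$ together with the transitivity of stable equivalence to see that the gluing/consistency conditions of Section~\ref{sec:main} defining a generating family on the arboreal graph $\mathcal{T}$ are met. One subtlety to address carefully: the Hamiltonian $H_t$ is defined on all of $\Sigma(\mathcal{T})$, so its restrictions to overlapping charts automatically agree on overlaps, which is what guarantees the auxiliary fibre variables are genuinely globally defined (not merely chart-local) --- this is the reason the construction descends to a global object rather than only a collection of unrelated local lifts. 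Finally, iterating over a partition of $[0,1]$ into intervals short enough for the small-isotopy case, and invoking closedness of the ``admits a generating family'' condition under stable equivalence at each step, yields the generating family for $\psi_1(\gamma) = \psi(\gamma)$, completing the proof.
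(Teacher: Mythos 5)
Your overall strategy (reduce to small isotopies, lift chart by chart, reconcile the charts via the rotation operator) points in the right direction, but there is a genuine gap at exactly the place you call ``the main obstacle and technical heart'': you assert, rather than prove, that the chartwise Chekanov compositions can be matched across an edge after applying $\circlearrowleft$, appealing to ``functoriality and naturality properties'' that are not actually available in that form. The difficulty is real: the identifications $U_v \cong \mathbb{T}^*(I_v)$ and $U_w \cong \mathbb{T}^*(I_w)$ differ by the rotation $W$ over a leg, so the fibrewise sums you perform on the two charts are genuinely different constructions, and the fact that $H_t$ is globally defined does not by itself make the auxiliary fibre variables compatible. Worse, the Chekanov-type composition is \emph{not local}: as the paper shows in the subsection on localization, a compactly supported change of $F$ produces a change of $P(F;S)$ whose support is not compact, so even restricting the two new families to the common band does not give stably equivalent families without further work. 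Your proposal does not engage with this localization problem at all, and without it the consistency conditions of Section~4 cannot be verified.

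The paper avoids this by fragmenting \emph{spatially}, not only temporally: by the fragmentation lemma, $\psi$ is written as a composition of maps each $C^1$-close to the identity \emph{and supported in a single chart} $W_{v_i}$. Then at each step only one local family changes nontrivially (via the Eliashberg--Gromov lifting over an interval), the localization construction produces a fibered diffeomorphism making the discrepancy compactly supported after applying $\dag$, and an explicit gluing lemma (using the stable invertibility $((E,F)^{\dag})^{-\dag} \cong^{st} (E,F)\oplus(V,q)$) propagates the modification to the adjacent charts while keeping the old families, suitably stabilized, everywhere else. Your purely time-based cutting gives maps that are small but still move the curve in every chart simultaneously, so you would have to solve the matching problem on all edges at once; this is precisely what the paper's argument is engineered to avoid. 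To repair your proof you would need either to import the fragmentation lemma and the gluing/localization machinery, or to actually establish the commutation of the composition formula with $\circlearrowright$ up to stable equivalence together with a localization statement --- neither of which is a routine verification.
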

Как мы указывали выше, Арбореллевские графы являются частным случаем Арбореллевских пространств, для которых точно также определены обобщенные кокасательные расслоения. В дальнейшем мы собираемся обобщить наши результаты на эту более общую ситуацию.

Теорема о поднятии гамильтоновой изотопии имеет множество применений. В том числе, из ее версии для семейств, квадратичных на бесконечности, выводится доказательство локальной версии Лагранжевой гипотезы Арнольда. Мы надеемся в будущем ввести аналог условия квадратичности на бесконечности и применить нашу теорему в этом контексте, чтобы получить обобщения локальной гипотезы Арнольда на Арбореллевские пространства.

\subsubsection*{Благодарности}
Я бы хотел поблагодарить П.Е. Пушкаря, который рассказал мне о производящих семействах, моих научных руководителей Л. Кацаркова из ВШЭ и А.А. Рослого из Сколтеха, а так же М.Э. Казаряна и Н.М. Курносова. 
\newpage
\section{Симплектическая геометрия открытых поверхностей}
\label{sec:surface}
\textbf{Поверхность} это двумерное вещественное многообразие без края конечного рода.
Рассмотрим некомпактную поверхность $S$ с симплектической структурой, заданной формой
\begin{equation*}
    \omega\in\Omega^2(S).
\end{equation*} 
Так как $H^2(S,\mathbb{R})=0$, форма объема $\omega$ точна. \textbf{Лиувиллева форма} это ее потенциал
\begin{equation*}
    \lambda\in \Omega^1(S,\mathbb{R}), \mbox{ } \omega=d\lambda.
\end{equation*}
\textbf{Лиувиллево векторное поле} двойственно $\lambda$ относительно невырожденного спаривания $\omega$
\begin{equation*}
    X\in\mathcal{X}(S), \mbox{ } \iota_{X} \omega= \lambda. 
\end{equation*} 
\begin{mydef}
\textbf{Точная симплектическая поверхность} $\mathbb{S}$ это симплектическое многообразие размерности $2$ с фиксированными Лиувиллевыми формой и векторным полем
\begin{equation*}
    \mathbb{S}=({S},\omega,\lambda, X), \mbox{ } \omega=d\lambda,\mbox{ } \iota_{X} \omega= \lambda.
\end{equation*}
Поверхность $\mathbb{S}$, на которой поле $X$ полно, называется \textbf{Лиувиллевой поверхностью}, если существует такое компактное подмногообразие с краем $\Sigma\subset S$, что поле $X$
\begin{itemize}
    \item не имеет нулей вне подобласти $\Sigma$,
    \item трансверсально $\partial \Sigma$ и "торчит наружу"{}.
\end{itemize}
\centering 
\includegraphics[width=7cm]{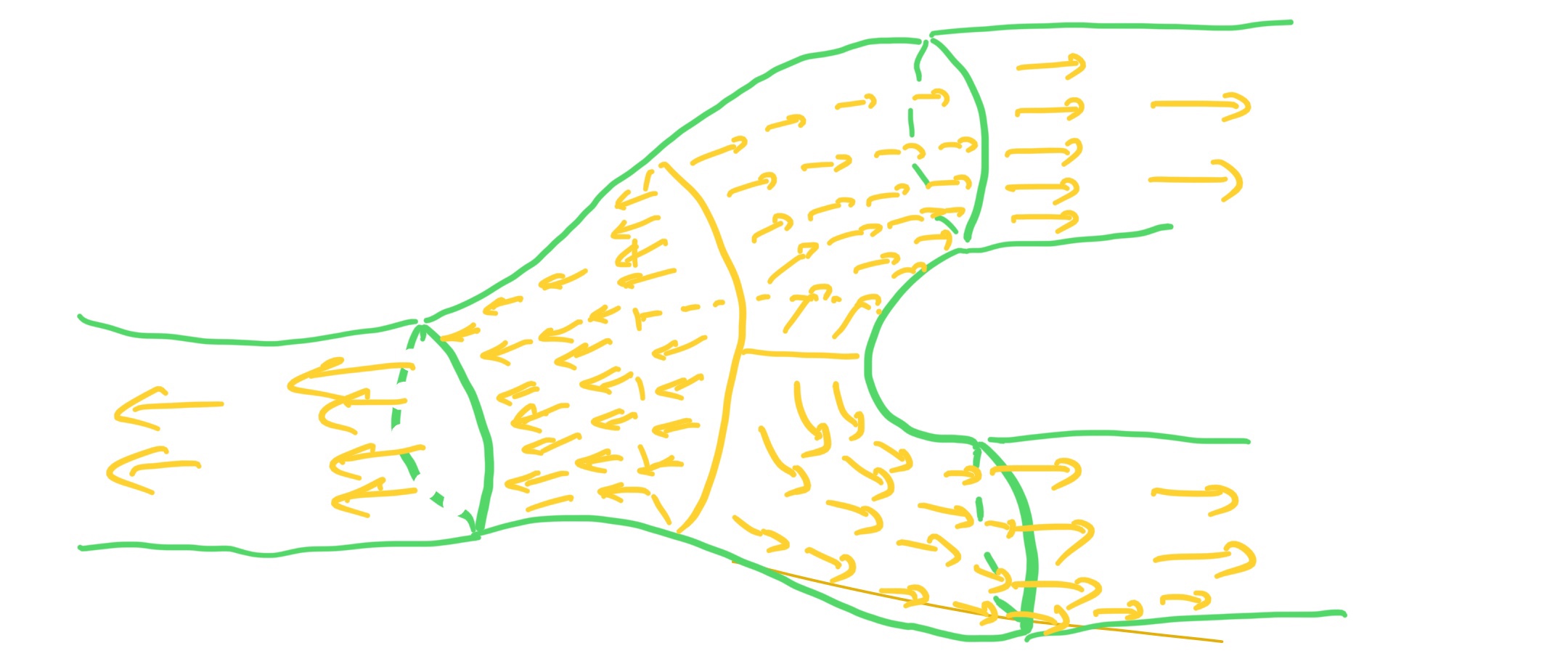}
\includegraphics[width=5cm]{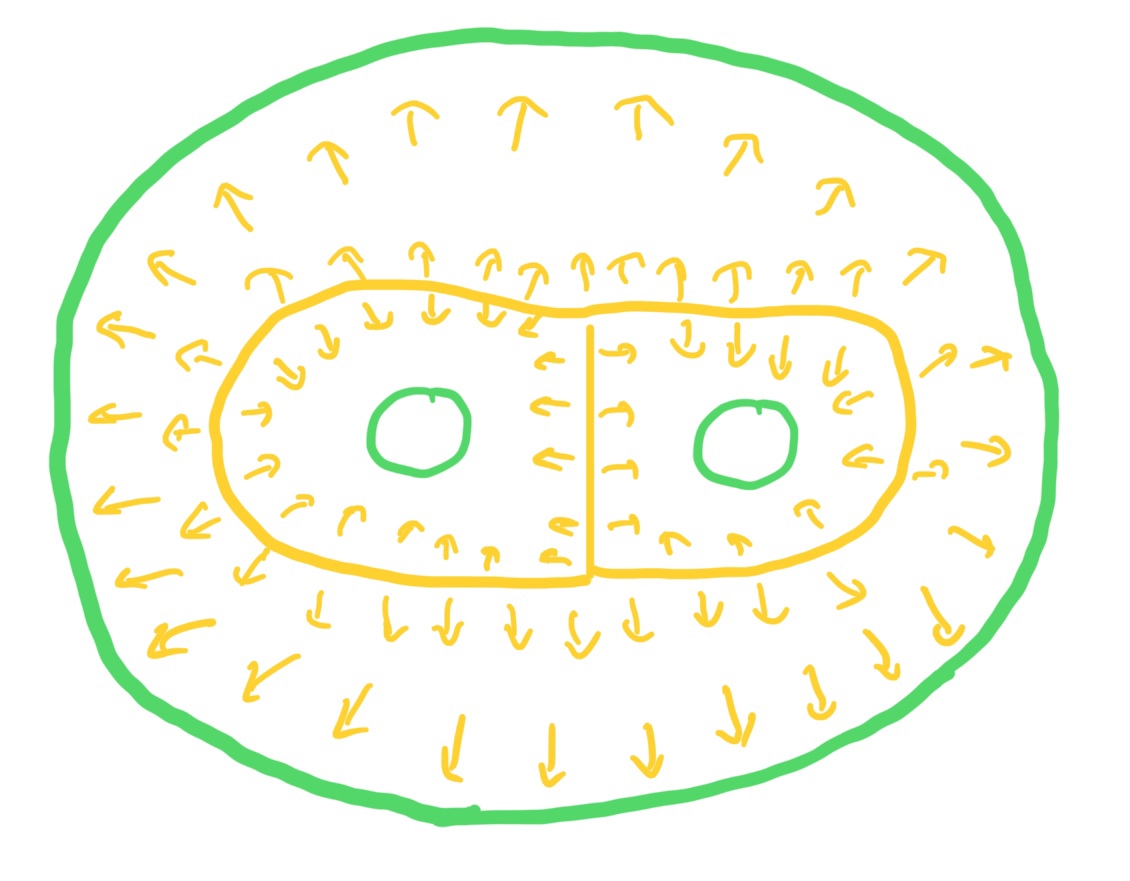}
\newline
На картинке изображена поверхность "штаны"{} со стандартной симплектической структурой. Желтыми стрелочками обозначены линии Лиувиллева векторного поля, а сплошной желтой линией - множество его нулей. 
\end{mydef}
Конечно, вся тройка тензоров $(\omega,\lambda, X)$ восстанавливаются по Лиувиллевой форме $\lambda$.\\
Изучение точных симплектических структур и Лиувиллевых многообразий (любой размерности) было начато в~\cite{EG}. 
Современный обзор теории см.  например~\cite{CE},~\cite{E},~\cite{Arborealization3}, раздел $2$.

\subsection*{Лиувиллевы структуры на поверхностях}
Определение Лиувиллевой поверхности аксиоматизирует свойства цилиндра $\mathbb{T}^*(S^1)$. 
\begin{ex} На кокасательном пространстве  $T^*S^1$ определена каноническая 1-форма $\theta$: 
\begin{equation*}
    \forall x\in S^1\mbox{, } v\in T^*_xS^1 \mbox{, } l\in T_{(x,v)}(T^*S^1), \mbox{ } \theta_{(x,v)}(l):=\pi^*v_{(x,v)}(l)=\langle D\pi_{(x,v)}(l), v\rangle.
\end{equation*}
В координатах форма $\theta$ имеет вид $pdq$, так что форма $\omega=d\theta=dp\wedge dq$ невырождена. Лиувиллево векторное поле записывается как $p\frac{\partial}{\partial p}$, так что оно полно. В качестве $\Sigma$ подойдет
\begin{equation*}
    \Sigma:=D^*S^1:=\{(x,v)\in T^*S^1|\mbox{ } |v|\le 1\}. 
\end{equation*}
Таким образом $\mathbb{T}^*(S^1)=(T^*S^1, dp\wedge dq, pdq, p\frac{\partial}{\partial p})$ является Лиувиллевой поверхностью.\\
\begin{center}
    \includegraphics[width=4cm]{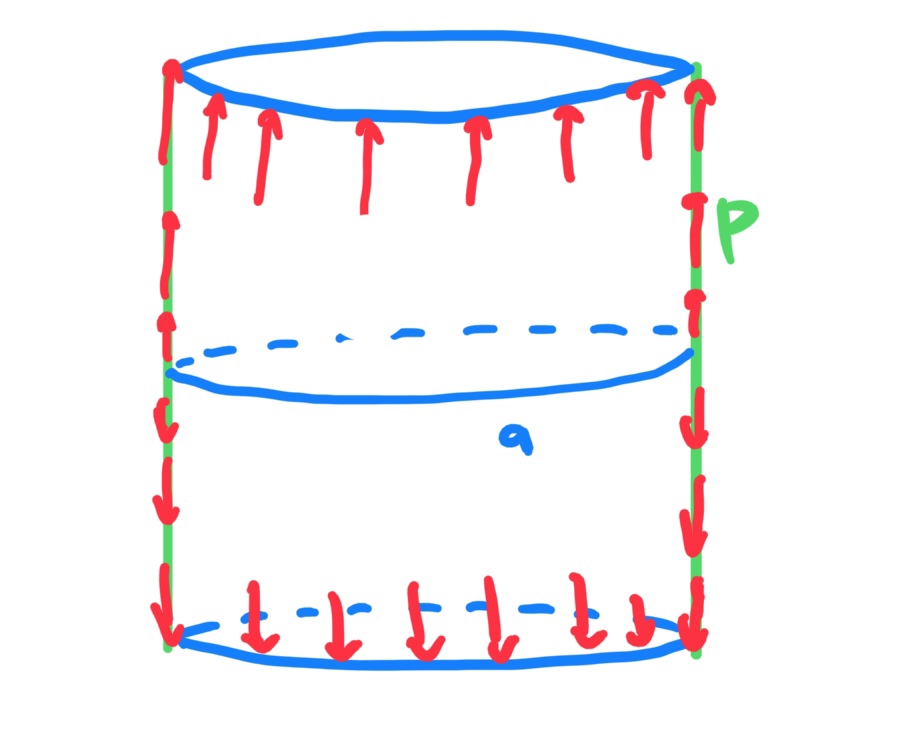}
\end{center}
На рисунке изображен цилиндр с координатами $q,p$. Красным отмечено  векторное поле $p\frac{\partial}{\partial p}$
\end{ex}
По определению, Лиувиллева поверхность ориентирована. Кроме того, она некомпактна: в противном случае, она имеет конечную площадь, которая оказывается равна нулю 
\begin{equation*}
    0<Vol \mbox (S)=\int\limits_{[S]}\omega=\langle [S],[\omega] \rangle=0.
\end{equation*}
Сейчас мы построим Лиувиллеву структуру на любой некомпактной ориентируемой поверхности $S$. Для начала введем на $S$ структуру римановой поверхности. Известно, что полученную кривую можно голоморфно и собственно вложить в аффинное пространство $\mathbb{C}^n$.
\begin{ex}
Рассмотрим замкнутую комплексную кривую $S\subset \mathbb{C}^n$. Ограничение 
\begin{equation*}
    f:=|z|^2|_{S}\in\mathcal{C}^\infty(S)
\end{equation*}
функции $|z|$ на $S$ определяет плюрисубгармоническую функцию $f$. Другими словами,
\begin{equation*}
\forall p\in S,\mbox{ } \forall v\in T_pS, \mbox{ }  -dd^{\mathbb{C}}f_z(v,Jv)>0,
\end{equation*}
то есть форма $\omega:=-dd^{\mathbb{C}}f$ невырождена. В качестве Лиувиллевой формы рассмотрим
\begin{equation*}
    \lambda:=-d^{\mathbb{C}}f.
\end{equation*}
Лиувиллево поле для $\lambda$ совпадает с градиентом $\nabla_{\rho}f$ относительно метрики $\rho=\omega(J\cdot,\cdot)$. В частности, оно полно. Рассмотрим $t\in \mathbb{R}$, превышающее все критические значения $f$
\begin{equation*}
     t> \max vCrit(f).
\end{equation*}
Тогда поверхность с краем $\Sigma:=f^{-1}((-\infty, t])$ удовлетворяет нашему условию, а значит 
\begin{equation*}
    \mathbb{S}:=(S,-dd^{\mathbb{C}}f,-d^{\mathbb{C}}f,\nabla_{\rho}f)
\end{equation*}
является Лиувиллевой поверхностью. Она называется \textbf{Штейновой поверхностью}.
\end{ex}
Таким образом, на $S$ существует Лиувиллева структура.\\
Достаточно хорошая структура Лиувиллевой поверхности на $S$ единственна. Мы не будем это доказывать, но определение изоморфизма Лиувиллевых поверхностей дадим. 
\begin{mydef}
\textbf{Изоморфизм} Лиувиллевых поверхностей $\Phi$ это такой диффеоморфизм 
\begin{equation*}
    \Phi:\mathbb{S}_1\to \mathbb{S}_2, \mbox{ что}
\end{equation*}
\begin{equation*}
   \Phi^*\lambda_2=\lambda_1+dh, \mbox{ } h\in\mathcal{C}^\infty(S_1)
\end{equation*}
где $h$ имеет компактный носитель. В частности, $\Phi$ это точный симплектоморфизм.
\end{mydef}
Напомним, что \textbf{гамильтоновым симплектоморфизмом} $\mathbb{S}$ называется диффеоморфизм 
\begin{equation*}
    \psi\in Ham(\mathbb{S}),
\end{equation*}
для которого найдется семейство $H_t\in\mathcal{C}^\infty(S), t\in [0,1]$, такое, что $\psi=\psi_1$, где 
\begin{equation*}
    \psi_t\in Diff(S), 
\end{equation*}
\begin{equation*}
    \frac{\partial}{\partial t}\psi_t=X_t\circ \psi_t,\mbox{ } \iota_{X_t}\omega = dH_t. 
\end{equation*}
\textbf{Носителем} диффеоморфизма $\psi$ 
называется замыкание $ supp(\psi):= \overline{\{x\in S|\mbox{ } \psi(x)\ne x\}}.$\\
Группа гамильтоновых симплектоморфизмов $\mathbb{S}$ с компактным носителем обозначается $Ham^c(\mathbb{S})$.
\begin{ex}
Рассмотрим $\psi\in Ham^c(\mathbb{S})$ и соответствующее семейство $H_t$. Тогда 
\begin{equation*}
    \mathcal{L}_{X_t}(\lambda)=\iota_{X_t}\omega+d\iota_{X_t}\lambda=d(H_t+\iota_{X_t}\lambda)\Rightarrow \psi_{t_0}^*(\lambda)=\lambda+d\int_0^{t_0}(H_t+\iota_{X_t}\lambda)dt.
\end{equation*}
Таким образом $\psi$ это точный симплектоморфизм. Так как $supp(\psi)$ компактен, а $\psi^*(\lambda)-\lambda$ тождественно равно $0$ вне $supp(\psi)$, $\psi$ определяет Лиувиллев автоморфизм поверхности $\mathbb{S}$.
\end{ex}
\subsection*{Скелет Лиувиллевой поверхности}
Рассмотрим Лиувиллеву поверхность $\mathbb{S}=({S},\omega,\lambda, X)$ и  соответствующую поверхность $\Sigma$. \\
По формуле Картана, производная вдоль $X$ сохраняет $\lambda$, а его поток $\phi_t$ растягивает $\lambda$
\begin{equation*}
    \mathcal{L}_X(\lambda)=\iota_Xd\lambda+d\iota_X\lambda=\iota_X\omega+ \iota_X\iota_X\omega=\lambda, 
\end{equation*}
\begin{equation*}
   \frac{\partial}{\partial t}\phi_t^*\lambda= \phi_{t_0}^*(\mathcal{L}_X(\lambda))=\phi_{t_0}^*\lambda\Rightarrow \phi_{t_0}^*\lambda = e^t\lambda. 
\end{equation*}
По теореме о трубчатой окрестности, он задает диффеоморфизм 
\begin{equation*}
      \Phi=(\phi_t,t): (\partial \Sigma \times\mathbb{R}_-,d(e^t\alpha),e^t\alpha,\frac{\partial}{\partial t})\to (Op(\partial \Sigma),\omega,\lambda,X).
  \end{equation*}
Используя эту тривиализацию, продолжим $(\omega,\lambda,X)$ на некомпактное многообразие 
\begin{equation*}
    \widehat{\Sigma}:=\Sigma\cup_{\partial \Sigma} \partial \Sigma\times [0,+\infty), \mbox{ } (\omega, \lambda, X)|_{\partial \Sigma\times\mathbb{R}}=(d(e^t\alpha),e^t\alpha,\frac{\partial}{\partial t}).
\end{equation*}
\begin{center}
\includegraphics[width=7cm]{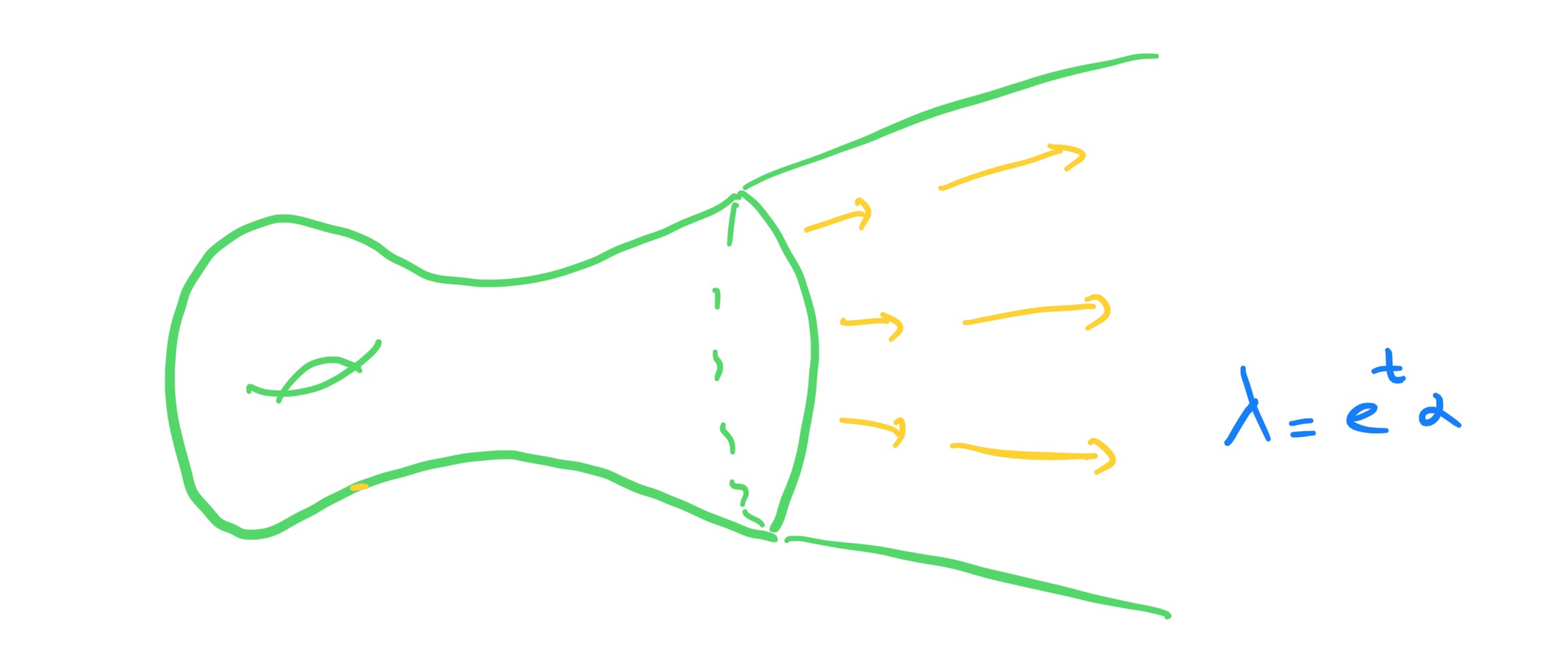}
\newline
\textit{Результат пополнения области, полученной выкидыванием открытого диска из тора.}
\end{center}
Поверхность с краем $\Sigma$ называется Лиувиллевой областью, а $\widehat{\Sigma}$ ее пополнением. Так как $X$ полное и не имеет нулей вне $\Sigma$, его поток определяет диффеоморфизм  $\widehat{\Sigma}$ на $S$. Поле $X$ действует на $\lambda$ и $\omega$ растяжениями, так что полученный диффеоморфизм сохраняет $(\omega,\lambda,X)$
\begin{equation*}
   \Phi: \mathbb{S}\cong \widehat{\Sigma}. 
\end{equation*}
Наоборот, дополнение до края Лиувиллевой области $\Sigma$ наделяется структурой Лиувиллевой поверхности, изоморфной $\widehat{\Sigma}$. Это позволяет думать про $\Sigma$ как про компактификацию $\mathbb{S}$. \\
Таким образом, $\mathbb{S}$ определяется произвольной Лиувиллевой подобластью $\Sigma\subset S$. Пересечение всех подобластей называется \textbf{скелетом} Лиувиллевой поверхности $\mathbb{S}$. Это аттрактор поля $-X$
\begin{equation*}
    Sk(\mathbb{S}):=\{p\in S|\mbox{ траектория } \phi_t(p)\mbox{ содержится в компакте} \} = \bigcap_{t>0}\phi_{-t}(\Sigma).
\end{equation*}
\begin{center}
\includegraphics[width=10cm]{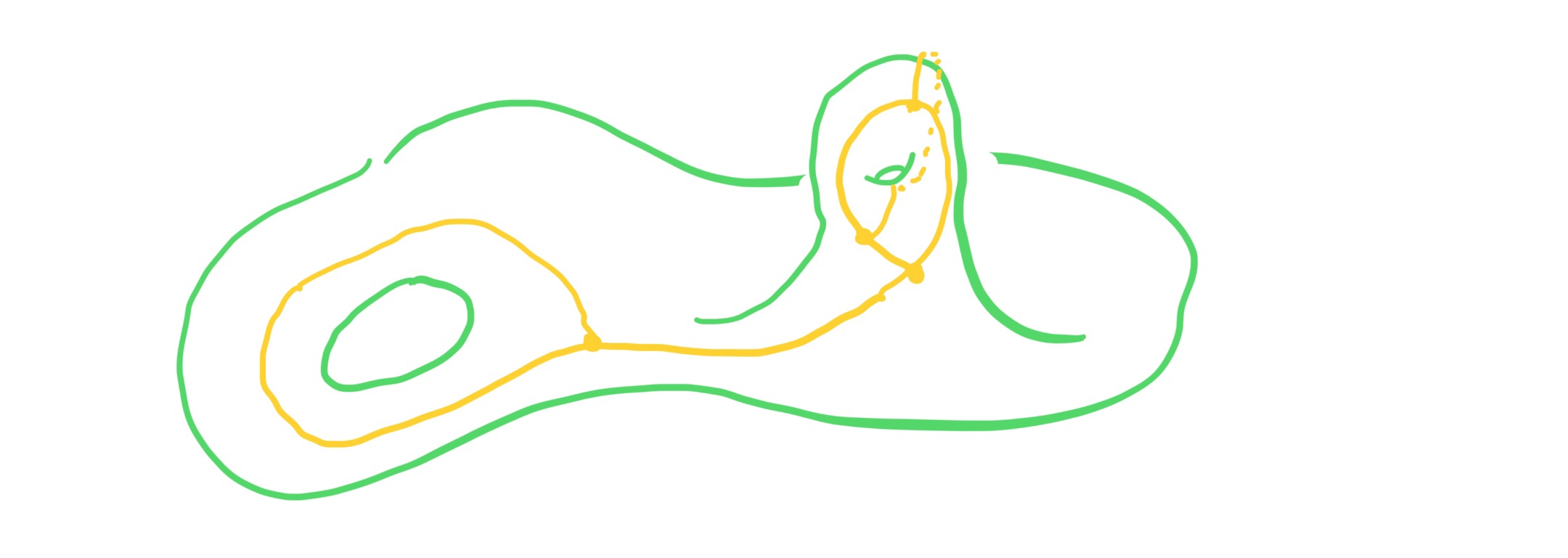}
\newline
\textit{Желтым изображен скелет зеленой области}
\end{center}
Скелет  Лиувиллевой поверхности $\mathbb{S}$ это компактное подмножество $S$ меры $0$. Поток $\phi_t$ Лиувиллева  поля $X$ сохраняет скелет $Sk(\mathbb{S})$ и ретракирует на него всю поверхность $S$. 
\begin{ex}
Скелет Лиувиллевой поверхности $\mathbb{T}^*S^1$ совпадает с нулевым сечением $0_{S^1}$.
\end{ex}
Мы хотим рассматривать $Sk(\mathbb{S})$ как аналог нулевого сечения Лиувиллевой поверхности, а саму поверхность $\mathbb{S}$ как "обобщенное кокасательное расслоение"{} от ее скелета. Наша задача - определить производящие семейства на $Sk(\mathbb{S})$ исходя из этой аналогии.\\ 
Теория производящих семейств основана на том, что гладкая геометрия $0_{S^1}$ определяет симплектическую геометрию $\mathbb{T}^*S^1$. Однако то, что скелет $\mathbb{T}^*Q$ гладкий, это аномалия.
\begin{ex}
Пусть $Sk(\mathbb{S})$ является гладким многообразием. Так как $Sk(\mathbb{S})$ вложен в $S$ и имеет меру $0$, его размерность не превышает $1$. Так как скелет связен и компактен, это отрезок, окружность или точка. Таким образом, поверхность $\mathbb{S}$ гомотопически эквивалентна $S^1$ или стягиваема, а значит она диффеоморфна цилиндру или плоскости.
\end{ex}
Для других $\mathbb{S}$ скелет не гладкий и, более того, может выглядеть очень плохо. \\
Однако, $Sk$ не инвариантен при Лиувиллевых изоморфизмах, так что можно поискать наиболее простую форму скелета для Лиувиллевых поверхностей, изоморфных $\mathbb{S}$.
\begin{ex}
Рассмотрим Штейнову поверхность $\mathbb{S}:=(S,-dd^{\mathbb{C}}f,-d^{\mathbb{C}}f,\nabla_{\rho}f)$. Так как $f$ растет вдоль потока $\nabla_{\rho}f$, траектория $\phi_t(x)$ содержится в компакте тогда и только тогда, когда $\lim \phi_t(x)$ является критической точкой $f$. Таким образом, скелет это объединение 
\begin{equation*}
    Sk(\mathbb{S}):=\bigcup_{p\in Crit(f)}W^{st}(p), \mbox{ } W^{st}(p):=\{x\in S|\mbox{ } \lim_{t\to\infty}\phi^t(x)=p\}
\end{equation*}
стабильных подмногообразий $W^{st}(p)$ для всех критических точек $p$ функции $f$.
\end{ex}
Так как плюрисубгармоничность это открытое условие, мы можем выбрать в качестве $f$ функцию Морса-Смейла. Тогда каждое из множеств $W^{st}(p)$ является объединением конечного числа градиентных траекторий, заканчивающихся в $p$. Следовательно, $ Sk(\mathbb{S})$ является \textbf{вложенным графом}, то есть таким графом $\Gamma$, каждое ребро которого гладко вложено в $S$.
\subsection*{Арбореллевские графы}
Ниже, мы определим производящее семейство на $\Gamma$ и сопоставим ему кривую в $\mathbb{S}$, $Sk(\mathbb{S})=\Gamma$. Чтобы это сделать, надо уметь восстанавливать поверхность $\mathbb{S}$ по ее скелету $\Gamma$. Лиувиллева поверхность $\mathbb{S}$ восстанавливается по произвольной открытой окрестности $Op(\Gamma)$, так как
\begin{equation*}
    \Sigma\subset Op(\Gamma)
\end{equation*}
для некоторой Лиувиллевой подобласти $\Sigma\subset S$ и $\mathbb{S}=\widehat{\Sigma}$. Таким образом, достаточно уметь восстанавливать $Op(\Gamma)$.  Если $\Gamma=S^1$, то по теореме Вайнштейна $Op(\Gamma)=\mathbb{T}^*(S^1)$, однако для произвольного графа восстановить ее окрестность невозможно. 
\begin{ex}
Рассмотрим граф $\Gamma\subset S$ с одной вершиной и двумя петлями. Его окрестность $Op(\Gamma)$ диффеоморфна либо проколотому тору, либо штанам. Это зависит от того, как подклеиваются ленточки, отвечающие ребрам, к дискам, отвечающим вершинам.

\centering
\includegraphics[width=10cm]{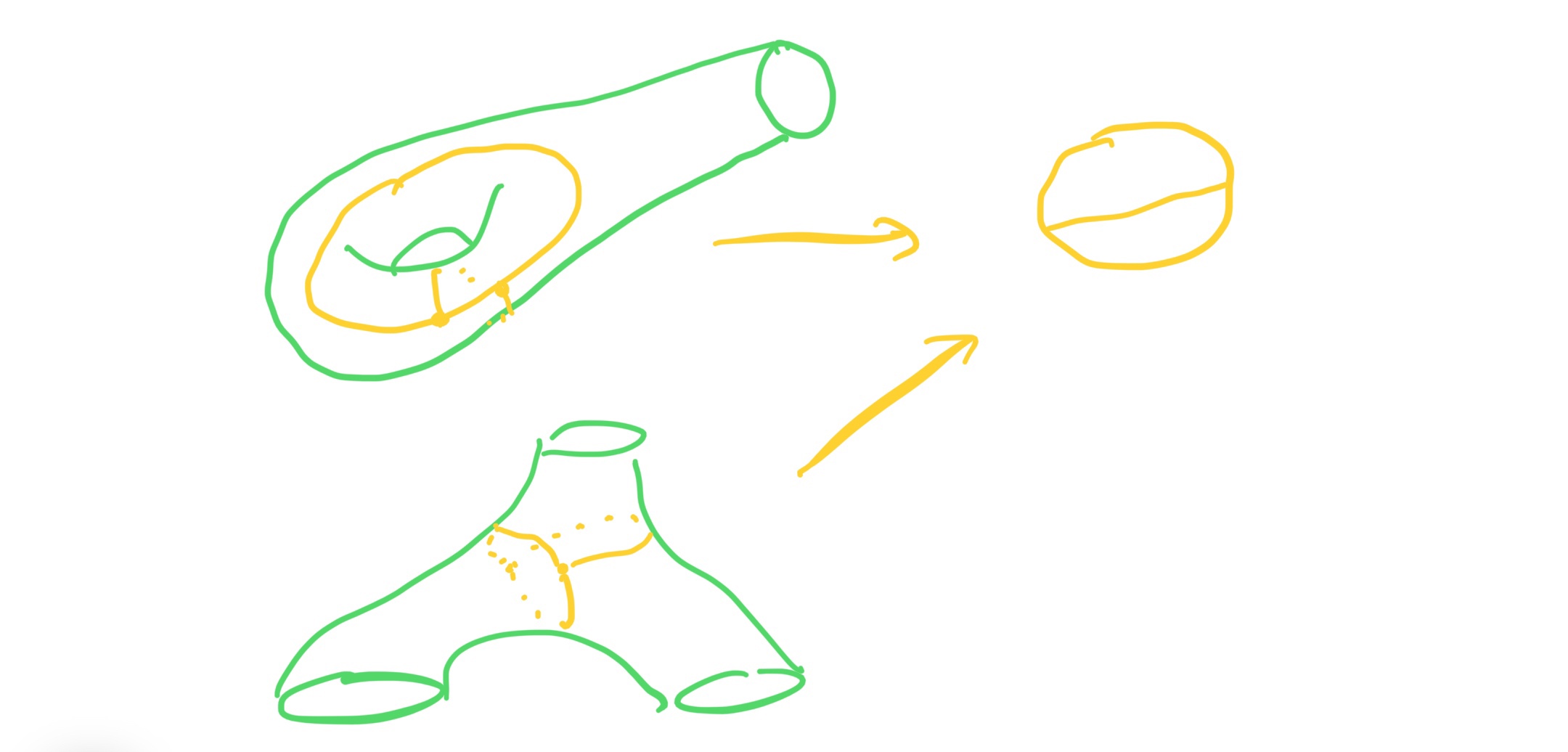}
\end{ex}
Обозначим множество вершин некоторого конечного графа $\Gamma$ за $\mathcal{V}(\Gamma)$, а множество ребер за $\mathcal{E}(\Gamma)$. Для каждой вершины $v\in \mathcal{V}(\Gamma)$ рассмотрим множество инцендентных ему ребер
\begin{equation*}
    \mathcal{E}(\Gamma,v):=\{e\in \mathcal{E}(\Gamma)|\mbox{ ребро } e \mbox{ инцендентно вершине } v\}, \mbox{ } d(v):=\#\mathcal{E}(\Gamma,v).
\end{equation*}
\textbf{Ленточная структура} на графе $\Gamma$ это циклический порядок на каждом из $\mathcal{E}(\Gamma,v)$. На графе, вложенном в ориентированную поверхность, он определен поворотом по часовой стрелке.\\
Ленточному графу $\Gamma$ отвечает компактная ориентированная поверхность с краем $\Sigma(\Gamma)$. 
\begin{mydef}
\textbf{Поверхность} $\Sigma(\Gamma)$ это объединение замкнутых дисков $\mathbb{D}_v$, $v\in \mathcal{V}(\Gamma)$ и лент 
$ R_e:=[0,1]\times [-1,1], \mbox{ } e\in \mathcal{E}(\Gamma)$ склеенных в соответствии с ленточной структурой на $\Gamma$
\begin{equation*}
    \Sigma(\Gamma):=\bigsqcup_{v\in \mathcal{V}(\Gamma)} \mathbb{D}_v\cup \bigsqcup_{e\in \mathcal{E}(\Gamma)} R_e.
\end{equation*} 
\begin{center}
\includegraphics[width=7cm]{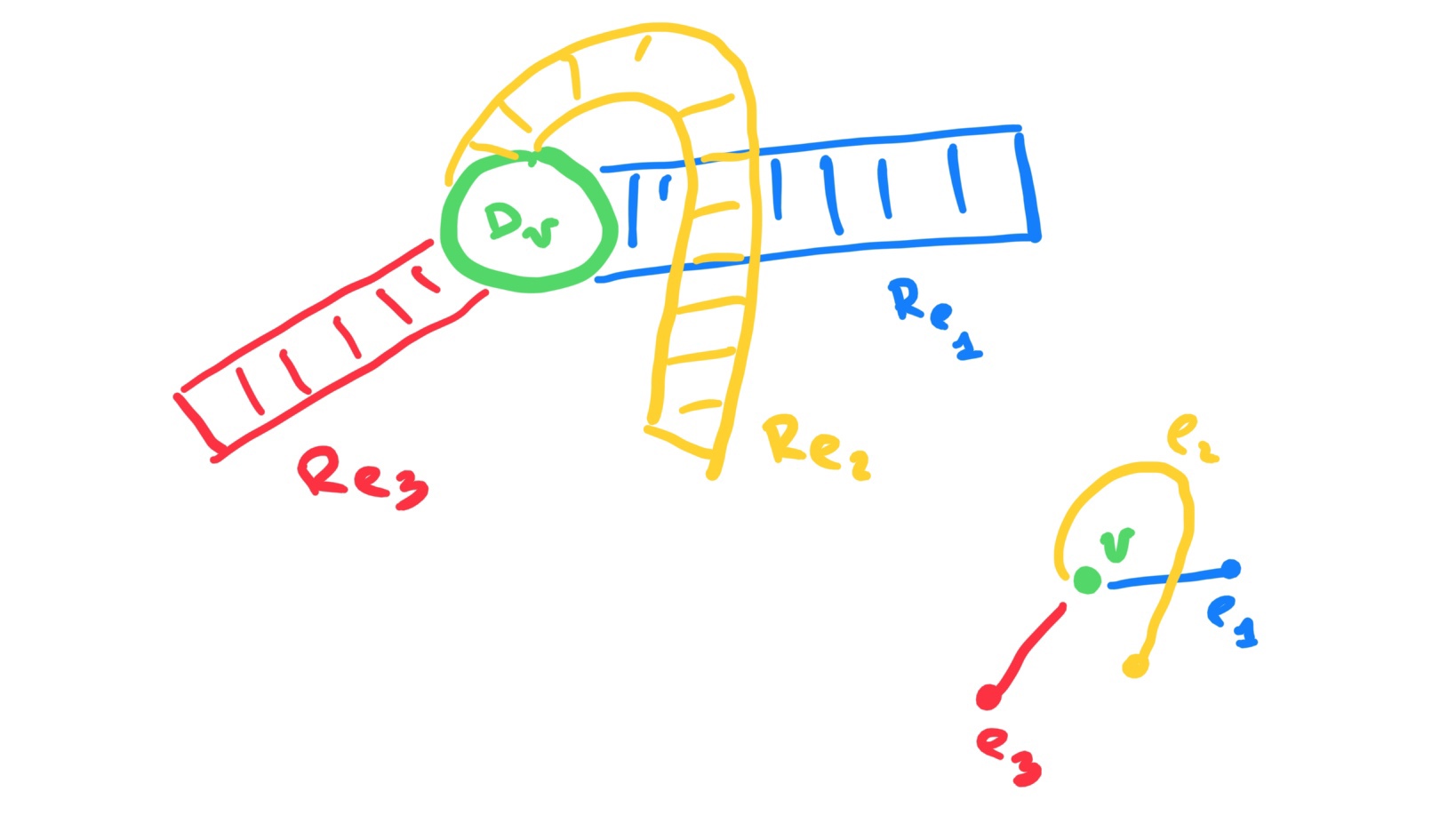}
\newline
Участок поверхности $\Sigma(\Gamma)$, отвечающий вершине и трем инцидентным ей ребрам.
\end{center}
Некомпактная ориентированная поверхность $S(\Gamma):=\widehat{\Sigma(\Gamma)}$ получается пополнением $\Sigma(\Gamma)$
\begin{equation*}
    S(\Gamma):\Sigma(\Gamma)\cup_{\partial (\Sigma(\Gamma))} \partial (\Sigma(\Gamma))\times \mathbb{R}_+. 
\end{equation*}
\end{mydef}
Заметим, что $S(\Gamma)$ гомотопически эквивалентна $\Gamma$. Для графа $\Gamma$, вложенного в ориентированную поверхность,  $S(\Gamma)$ диффеоморфна достаточно малой открытой окрестности $Op(\Gamma)$.\\
Теперь мы построим Лиувиллеву структуру на ${S}(\Gamma)$, с которой будет удобно работать. Можно считать, что валентность $d(v)$ всех вершин $\Gamma$ не превышает трех. Это условие на скелет $\mathbb{S}$ не ограничительно: для Штейновой поверхности этого можно добиться, приведя функцию $f$ в общее положение. В таком случае,  $\Sigma(\Gamma)$ есть результат склейки ленточек и $T$-образных перекрестков, каждый из которых является компактной поверхностью с углами. 
\begin{center}
\includegraphics[width=4cm]{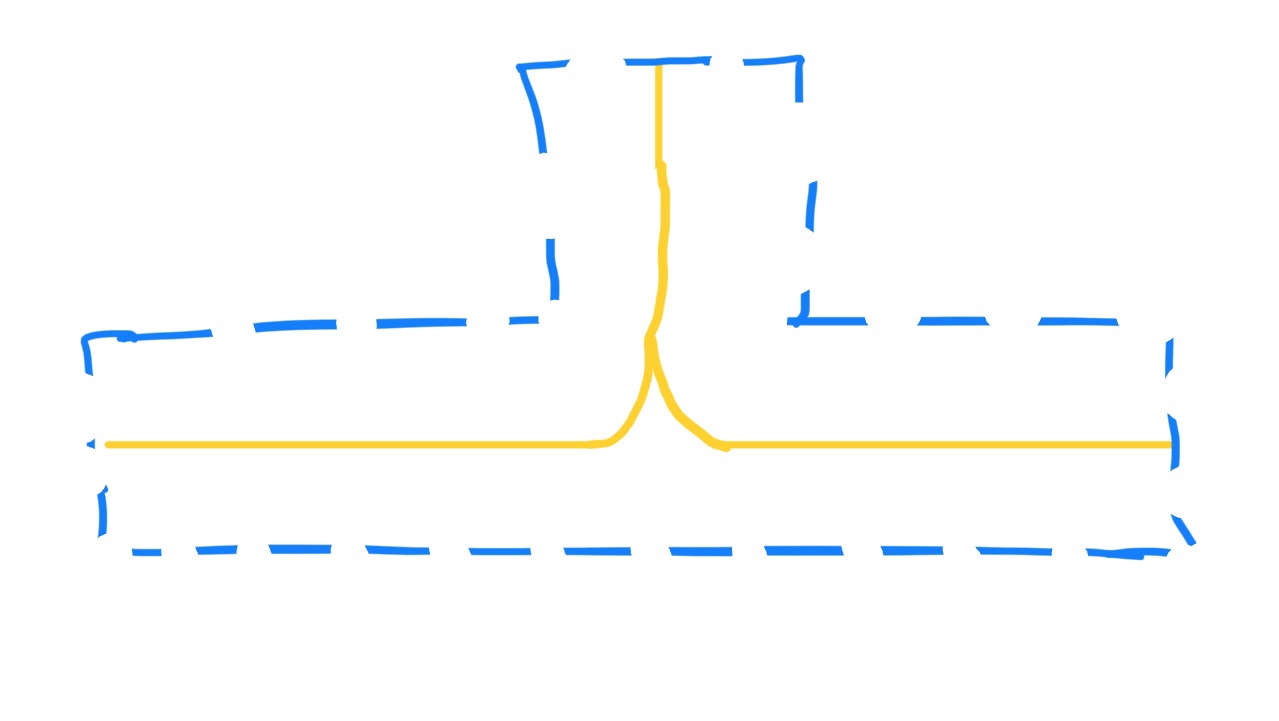}
\end{center}
Для каждой вершины валентности $3$ нам надо зафиксировать одно из инцидентных ей ребер. 
\begin{mydef}
\textbf{Арбореллевский граф} $\mathcal{T}$ это конечный ленточный граф $\Gamma$, для которого
\begin{equation*}
    \forall v\in \mathcal{V}(\Gamma)\mbox{ } d(v)\le 3, 
\end{equation*}
на котором для каждой вершины $v, d(v)=3$ зафиксирована ее \textbf{"ножка"{}}, то есть ребро
\begin{equation*}
   l(v) \in \mathcal{E}(\Gamma,v).
\end{equation*}
Мы предполагаем, что $\Gamma$ не имеет двойных ребер. 
\end{mydef}
Сейчас мы построим Лиувиллеву структуру на поверхности $S(\Gamma)$, используя структуру Арбореллевского графа $\mathcal{T}$. Для каждой вершины  $v\in\mathcal{V}(\Gamma)$ зафиксируем ее окрестность,  пересечение которой с $\Sigma(\Gamma)$ определяется только валентностью $v$ и имеет стандартный вид
\begin{equation*}
    \{|p|^2+|q|^2\le 1\}\cup \{q>0,\mbox{ } |p|<1\} \mbox{ при } d(v)=1,
\end{equation*}
\begin{equation*}
     \{|p|\le 1\} \mbox{ при } d(v)=2 \mbox{ и}
\end{equation*}
\begin{equation*}
    \{|p|\le 1\}\cup \{|q|\le 1, \mbox{ } |p|>0\} \mbox{ при } d(v)=3.
\end{equation*}
\begin{center}
\includegraphics[width=6cm]{8.jpg}
\end{center}
Рассмотрим гладкое вложение $\Gamma$ в $S(\Gamma)$, образ которого содержит объединение "нулевых сечений"{} $J_e:=(0,1)\times \{0\}\subset R_e$ и пересекается с этими открытыми множествами как 
\begin{equation*}
    \{p=0,\mbox{ } q\ge 0\} \mbox{ при } d(v)=1,
\end{equation*}
\begin{equation*}
     \{p=0\} \mbox{ при } d(v)=2 \mbox{ и}
\end{equation*}
\begin{equation*}
     \{p=0\}\cup \{q=0, \mbox{ } p>0\} \mbox{ при } d(v)=3.
\end{equation*}
Каждая из лент $R_e$ определяет в $S(\Gamma)$ подмножество, диффеоморфное $J_e\times \mathbb{R}$. Введем на нем Лиувиллеву структуру, совпадающую со стандартной Лиувиллевой структурой на $T^*[0,1]$. Продолжим ее на каждый из дисков $D_v$ так, что в стандартных координатах на диске симплектическая форма имеет вид $\omega=dp\wedge dq$, Лиувиллева форма отличается от $pdq$ на дифференциал функции $h$, а Лиувиллево векторное поле $X$ имеет следующий вид
\begin{center}
\includegraphics[width=10cm]{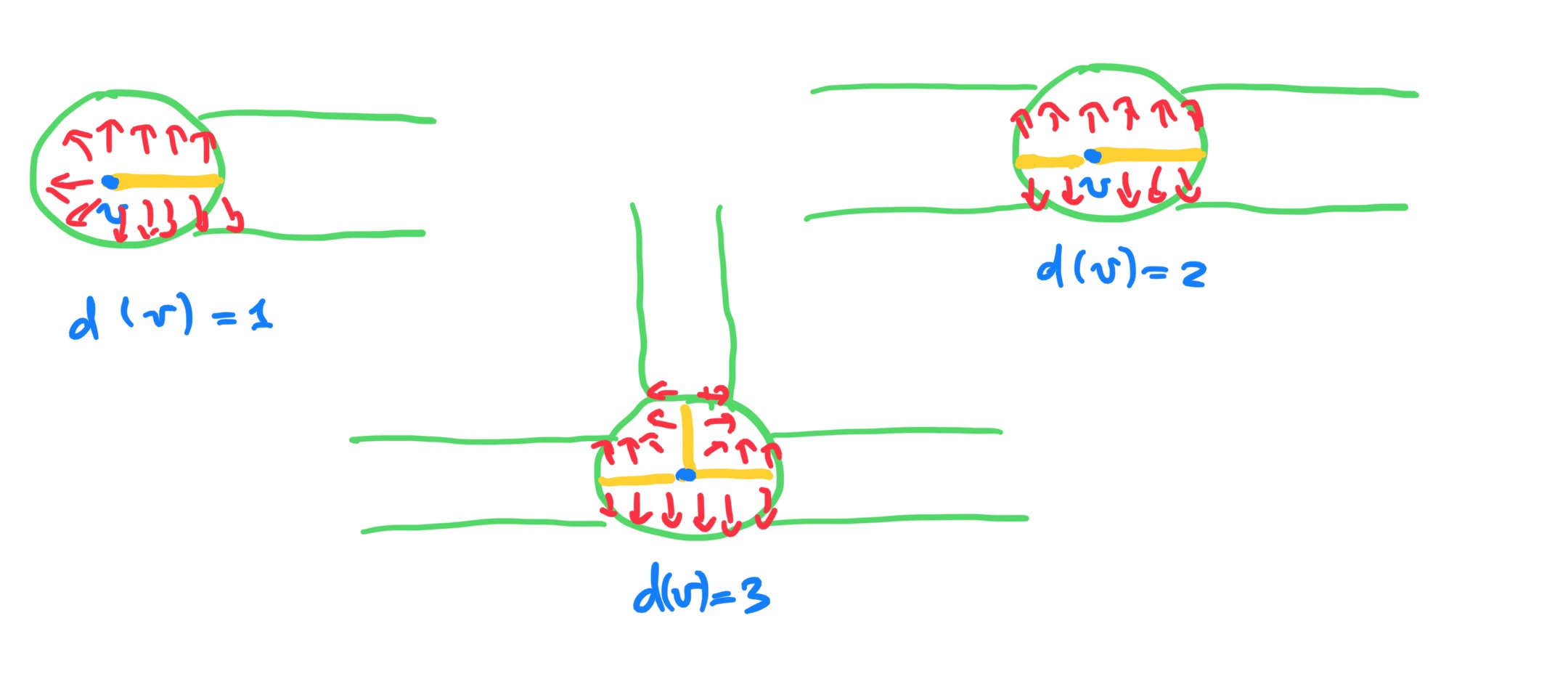}
\end{center}
В качестве $h$ можно выбрать, в зависимости от валентности $d(v)$ вершины $v$, функцию
\begin{itemize}
    \item интерполирующую $0$ на правой полуплоскости и $-pq/2$ на левой, если $d(v)=1$;
    \item тождественно равную нулю, если $d(v)=2$;
    \item инетрполирующую $0$ на дополнении лент шляпки шляпки до окрестности $v$ и $-pq$ на дополнении ленты ножки до окрестности $v$.
\end{itemize} 
\begin{mydef}
\textbf{Лиувиллева поверхность}  $\mathbb{S}(\mathcal{T})$, отвечающая Арбореллевскому графу $\mathcal{T}$ это
\begin{equation*}
    (S(\Gamma), d\lambda, \lambda, X). 
\end{equation*}
\end{mydef}
Заметим, что $\Sigma(\mathcal{T})$ является Лиувиллевой подобластью поверхности $\mathbb{S}(\mathcal{T})$, и кроме того
\begin{equation*}
   Sk(\mathbb{S}(\mathcal{T}))= \Gamma.
\end{equation*}

\section{Точные кривые на Лиувиллевых поверхностях}
\label{sec:curves}
\textbf{Кривой} на Лиувиллевой поверхности $\mathbb{S}$ называется вложение одномерного многообразия
\begin{equation*}
    \iota:C \to S. 
\end{equation*}
Кривая $(C,\iota)$ является Лагранжевым подмногообразием $(S,\omega)$, так как $\omega$ тождественно зануляется на $\iota(C)$. Если к тому же интеграл $\int_{\gamma}\lambda$ равен нулю, кривая называется \textbf{точной}.
\begin{mydef}
\textbf{Точной, или сбалансированной кривой} на $\mathbb{S}$ называется пара
\begin{equation*}
    \gamma=(C,\iota),
\end{equation*}
где $\iota:C \to S$ это параметризованная кривая, для которой существует \textbf{потенциал}
\begin{equation*}
   f\in\mathcal{C}^\infty (C);\mbox{ } df=\iota^*\lambda. 
\end{equation*}
Мы обозначаем той же буквой $\gamma$ соответствующее подмногообразие $\iota(C)$ и пишем $\gamma\subset \mathbb{S}$.
\end{mydef}
Лиувиллев автоморфизм $\phi$ поверхности $\mathbb{S}$ переводит точную кривую $\gamma\subset \mathbb{S}$ в $\phi(\gamma)$
\begin{equation*}
    \phi(\gamma):=(C,\phi\circ\iota,f+h\circ\iota), \mbox{ где}
\end{equation*}
\begin{equation*}
    \gamma=(C,\iota,f), \mbox{ } \phi^*\lambda=\lambda+dh.
\end{equation*}
\begin{center}
\includegraphics[width=5cm]{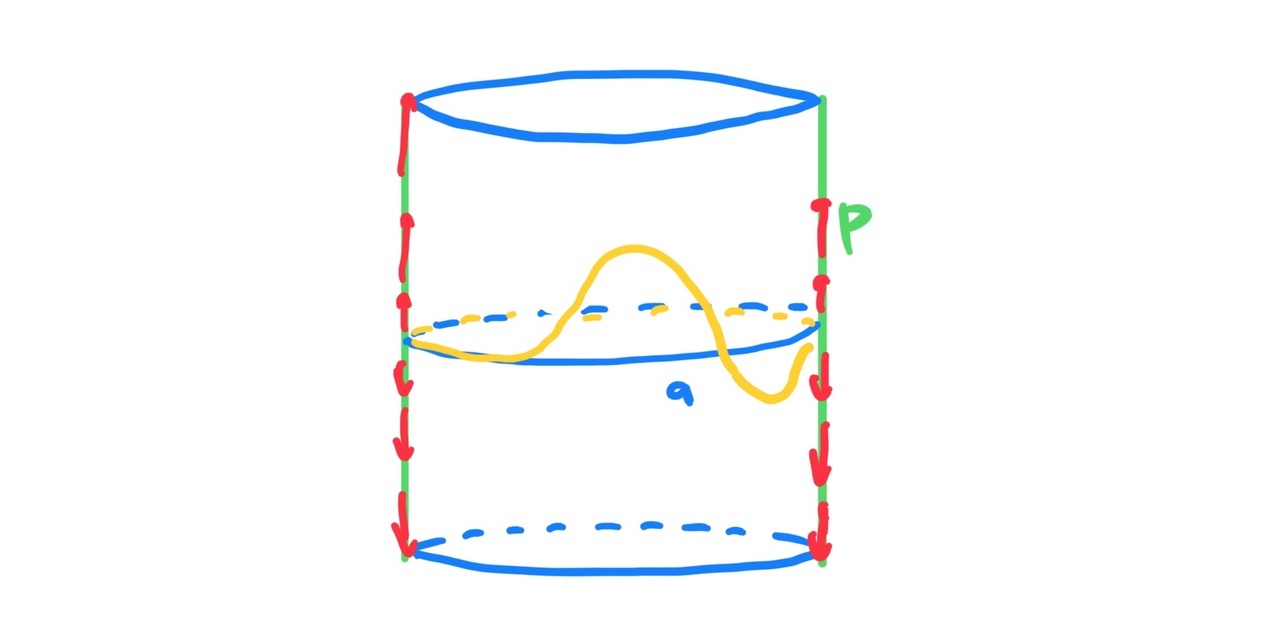}
\end{center}
\textit{Точная кривая, полученная шевелением нулевого сечения цилиндра}.
\subsection*{Производящие семейства на гладких кривых}
Рассмотрим в качестве $(S,\omega)$ кокасательное расслоение $\mathbb{T}^*(Q)$ к окружности или прямой. Существует общая конструкция, позволяющая строить точные кривые на этих поверхностях. 
\begin{mydef}
\label{def:gf}
\textbf{Производящее семейство} над кривой $Q$ это такая пара $(E,F)\in Gf(Q)$ 
\begin{itemize}
    \item $\pi_E:E\to Q$ - тривиализованное гладкое расслоение, слой которого диффеоморфен $\mathbb{R}^n$; 
    \item $F\in \mathcal{C}^\infty(E)$ - функция, для которой $\frac{\partial}{\partial \xi}F(x)$ {трансверсален нулю для каждой точки} $x$
    \begin{equation*}
        F_\xi(x)=d_xF|_{T_x(E_x)}:{T_x(E_x)}\to T_{F(x)}\mathbb{R}\cong \mathbb{R}\ni \{0\}.
    \end{equation*}
\end{itemize}
Множество послойных критических точек семейства $C_{E,F}:=\{F_\xi(x)= 0 \}$ это гладкая кривая на $E$, которую можно погрузить в $\mathbb{T}^*Q$ по формуле ${\iota_{E,F}}(x):= (\pi_E(x), F_q(x))$. Кривая 
\begin{equation*}
    \gamma\subset \mathbb{T}^*Q
\end{equation*} 
\textbf{допускает производящее семейство} $(E,F)\in Gf(Q),$ и \textbf{порождается им}, если существует диффеоморфизм $\phi:C\to C_{E,F}$, для которого выполнено $\iota=\iota_{E,F}\circ \phi,\mbox{ } f=F\circ \phi. $
\end{mydef}
\begin{ex}
Простейший пример производящего семейства задается функцией $f\in\mathcal{C}^\infty(Q)$
\begin{equation*}
    E=Q, \mbox{ } F\equiv f.
\end{equation*}
В таком случае, пара $(E,F)$ называется производящей функцией и определяет вложение 
\begin{equation*}
    C_{E,F}=Q\xrightarrow[]{df} T^*Q.
\end{equation*}
В частности, нулевое сечение $0_Q$ допускает производящую функцию $f\equiv 0$. 
\end{ex}
Малое гамильтоново шевеление точной кривой, допускающей производящее семейство, снова допускает производящее семейство. Обобщение этого утверждения на произвольные гамильтоновы шевеления это главный инструмент в работе с производящими семействами.
\begin{theorem}[Свойство гамильтонова подъема \cite{Chekanov}]
Образ кривой $\gamma_{E,F}\subset \mathbb{T}^*(Q)$ под действием Гамильтоновой изотопии $\psi\in Ham^c(\mathbb{T}^*(Q))$  также допускает производящее семейство. 
\end{theorem}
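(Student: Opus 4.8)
The plan is to carry the generating family along the Hamiltonian isotopy, which reduces everything to the case of a symplectomorphism $C^1$-close to the identity, and then to \emph{compose} generating families. Write $\gamma=\gamma_{E,F}$ and fix an isotopy $\psi_t$ with $\psi_0=\mathrm{id}$, $\psi_1=\psi$, generated by a compactly supported time-dependent Hamiltonian, everything supported in a fixed compact $K$. For $t$ near $t_0$ the map $\psi_t\circ\psi_{t_0}^{-1}$ again lies in $Ham^c(\mathbb{T}^*Q)$, is supported in $K$, and is $C^1$-close to $\mathrm{id}$. Hence the set $A:=\{t\in[0,1] : \psi_t(\gamma)\text{ admits a generating family}\}$ is open (apply the ``small case'' below to the curve $\psi_{t_0}(\gamma)$ and to $\varphi:=\psi_t\circ\psi_{t_0}^{-1}$) and closed (if some $t$ near $t_0$ lay in $A$, the small case applied to $\psi_t(\gamma)$ and to $\psi_{t_0}\circ\psi_t^{-1}$ would force $t_0\in A$). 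Since $0\in A$, we get $1\in A$; equivalently, one subdivides $[0,1]$ finely and iterates. So it suffices to prove: if $\gamma'\subset\mathbb{T}^*Q$ admits a generating family and $\varphi\in Ham^c(\mathbb{T}^*Q)$ is $C^1$-close to $\mathrm{id}$, then $\varphi(\gamma')$ admits one.

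For the small case I would first produce a classical generating function of $\varphi$. Writing $\varphi(q,p)=(\bar q,\bar p)$, the map $(q,p)\mapsto(q,\bar p)$ is a compactly supported $C^1$-small perturbation of $\mathrm{id}_{T^*Q}$, hence a diffeomorphism, so there is $S$ with $S(q,\bar p)-q\bar p$ compactly supported and $C^2$-small, characterized by $p=\partial_q S(q,\bar p)$ and $\bar q=\partial_{\bar p}S(q,\bar p)$; then $\partial^2 S/\partial q\,\partial\bar p$ is close to the identity, hence invertible, and $h:=\bar q\bar p-S$ is the compactly supported primitive of $\varphi^*\lambda-\lambda$ vanishing off $K$. (For $Q=S^1$ one passes to the universal cover, where $S(q+1,\bar p)=S(q,\bar p)+\bar p$.) Now let $(E,F)\in Gf(Q)$ generate $\gamma'$, with fibre coordinate $\xi$, and define $\tilde F$ over the base coordinate $\bar q\in Q$, with fibre coordinates $(q,\xi,\bar p)$, by
\[
\tilde F(\bar q; q,\xi,\bar p)\ :=\ F(q,\xi)-S(q,\bar p)+\bar q\bar p .
\]
The fibre-critical equations $\partial_\xi\tilde F=\partial_q\tilde F=\partial_{\bar p}\tilde F=0$ read $\partial_\xi F=0$, $\partial_q F(q,\xi)=\partial_q S(q,\bar p)$, $\bar q=\partial_{\bar p}S(q,\bar p)$: the first says $(q,\partial_q F(q,\xi))\in\gamma'$, and the last two say that $\varphi$ maps that point to $(\bar q,\bar p)$. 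Since $\partial_{\bar q}\tilde F=\bar p$, the curve $\iota_{\tilde E,\tilde F}$ of fibre-critical points maps exactly onto $\varphi(\gamma')$; and on that locus $\tilde F=F|_{C_{E,F}}+(\bar q\bar p-S)=f'+h$, which is precisely the potential Definition~\ref{def:gf} requires of $\varphi(\gamma')$ (a compactly supported Hamiltonian symplectomorphism being exact, cf.~\S\ref{sec:surface}).

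It remains to verify that $\tilde F$ is a bona fide generating family, and this is where the real work sits. First, the fibrewise differential $(\partial_q,\partial_\xi,\partial_{\bar p})\tilde F$ must be transverse to $0$; this is the standard \emph{fibre-sum lemma}, obtained by feeding transversality of $\partial_\xi F$ and invertibility of $\partial^2 S/\partial q\,\partial\bar p$ into a short linear-algebra argument, the $C^1$-smallness of $\varphi$ being exactly what forbids new degeneracies. Second, the model fibre must be $\mathbb{R}^n$: this is immediate when $Q=\mathbb{R}$ (the fibre is $\mathbb{R}\times\mathbb{R}^k\times\mathbb{R}$), but when $Q=S^1$ the coordinate $q$ ranges over $S^1$, so one uses $\bar q\approx q$ on the fibre-critical locus to localize $q$ to a small arc, trade it for an $\mathbb{R}$-valued coordinate, and glue with the trivial family away from $K$. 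I expect the fibre-sum lemma together with this $S^1$ bookkeeping to be the main obstacle; the reduction to small $\varphi$ and the matching of potentials are formal.
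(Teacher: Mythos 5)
Your argument is correct and is the classical Sikorav--Chekanov proof; but note that the paper does not actually prove this theorem --- it is imported verbatim with the citation to \cite{Chekanov} --- so the honest comparison is with the machinery the paper builds in Section~\ref{sec:chekanov} for its own main theorem, which is the same composition idea in a different presentation. You compose $(E,F)$ with a generating function of the \emph{second kind} $S(q,\bar p)$ for a $C^1$-small $\varphi$ (characterized by $p=\partial_qS$, $\bar q=\partial_{\bar p}S$), after fragmenting the isotopy; the paper's operator $Ch_S$ instead takes a generating family $(H,G)$ for the Lagrangian $\mathcal{L}_S=\varpi_{\mathbb{V}}(gr_S)\subset\mathbb{T}^*V$ over $V=L\oplus L$ and forms $P(F;S)=F(u,\xi)+G(l,t,\chi)-g(v,t)$, proving $\gamma_{M,P}=S(\gamma_{E,F})$ in Proposition~\ref{prop:chekanov}. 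The two formulas generate the same curve; the practical difference is that the paper's version works over the line $L$ with all new fibre variables living in vector spaces, which is what it needs to define the turning operator $\circlearrowright$ and its localization, whereas your mixed-variable version is more elementary but forces you to confront the fact that for $Q=S^1$ the new fibre coordinate $q$ ranges over the circle, so the total space is not an $\mathbb{R}^n$-bundle as Definition~\ref{def:gf} demands. You flag this and the fibrewise-transversality (``fibre-sum'') lemma as the remaining work; both are standard (the transversality computation you sketch --- use $\partial_{\bar q}$ to hit the third equation, invertibility of $\partial^2S/\partial q\,\partial\bar p$ to hit the first, and transversality of $F_\xi$ for the rest --- is right, and the $S^1$ issue is resolved by exactly the localize-and-glue you describe, since $\bar q$ and $q$ differ by a compactly supported amount on the critical locus), so I would accept the proof modulo writing those two steps out. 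Your identification of the potential $h=\bar q\bar p-S$ as the primitive of $\varphi^*\lambda-\lambda$ also matches the paper's convention for how exact symplectomorphisms act on exact curves.
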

Производящее семейство для кривой $\gamma_{E,F}\subset \mathbb{T}^*Q$ не единственно. Стартуя с $(E,F)$, мы можем построить новые производящие семейства для $\gamma_{E,F}$, используя следующие операции.
\begin{itemize}
    \item Семейства $(E_i,F_i)\in Gf(Q)$ \textbf{эквивалентны} $(E_1,F_1)\cong (E_2,F_2)$, если существует расслоенный  диффеоморфизм $\psi:E_1\to E_2$, для которого $F_1:=\psi^*F_2$. Тогда $\gamma_{E_i,F_i}$ совпадают
    \begin{equation*}
    \psi(C_{E_1,F_1})=C_{E_2,F_2}, \mbox{ } \iota_{E_1,F_1}=\iota_{E_1,F_2}\circ \psi.
    \end{equation*}
    \item Семейство $(E'.F')$ называется \textbf{стабилизацией} семейства $(E,F)$, если для некоторого векторного пространства $V$ и невырожденной квадратичной формы $q$ на $V$ выполнено
    \begin{equation*}
     (E',F'):=(E,F)\times (V,q).    
    \end{equation*}
     Семейство $(E',F')$ порождает кривую $\gamma_{E,F}$, так как $C_{E',F'}=C_{E,F}\times (0), \mbox{ } \iota_{E',F'}=\iota_{E,F}.$
\end{itemize}
Комбинируя эти две операции, получаем следующее определение 
\begin{mydef}
Производящие семейства $(E_i,F_i)\in Gf(Q)$ \textbf{стабильно эквивалентны}
\begin{equation*}
    (E_1,F_1)\cong^{st} (E_2,F_2),
\end{equation*}
если существует невырожденная квадратичная форма $q$ на пространстве $\mathbb{R}^n$, для которой
\begin{equation*}
    (E_1,F_1)\oplus (\mathbb{R}^n,q)\cong (E_2,F_2). 
\end{equation*}
\end{mydef}
\subsection*{Стандартное покрытие поверхности $\mathbb{S}(\mathcal{T})$}
Зафиксируем Арбореллевский граф $\mathcal{T}$. Сейчас мы построим покрытие соответствующей ему Лиувиллевой поверхности $\mathbb{S}(\mathcal{T})$ картами Дарбу, пронумерованными вершинами графа $\mathcal{T}$
\begin{equation*}
    U_{v},\mbox{ } v\in \mathcal{V}(\Gamma).
\end{equation*}
На $\Sigma(\Gamma)$ задано покрытие $\{W_v\}$, пронумерованное  множеством вершин графа $\mathcal{V}(\Gamma)$
\begin{equation*}
    \Sigma(\Gamma) = \bigcup_{v\in\mathcal{V}(\Gamma)}W_{v}, \mbox{ } W_v:=\mathbb{D}\cup \bigsqcup_{e\in \mathcal{E}(\Gamma,v)} R_e.
\end{equation*} 
В случае валентности $d(v)=3$ карта $W_{v}$ отождествляется со следующим подмножеством
\begin{equation*}
    \Sigma(\boldsymbol{\perp})=\{(q,p)\in\mathbb{V}|\mbox{ где }  p\ge -1 \mbox{ и } p\le 1 \mbox{ при } |q|\ge 1\}
\end{equation*}
которое называется \textbf{перекрестком}. Мы обозначаем за $\boldsymbol{\perp}$ граф с $1$ вершиной и $3$ полуребрами.
\begin{center}
\includegraphics[width=4cm]{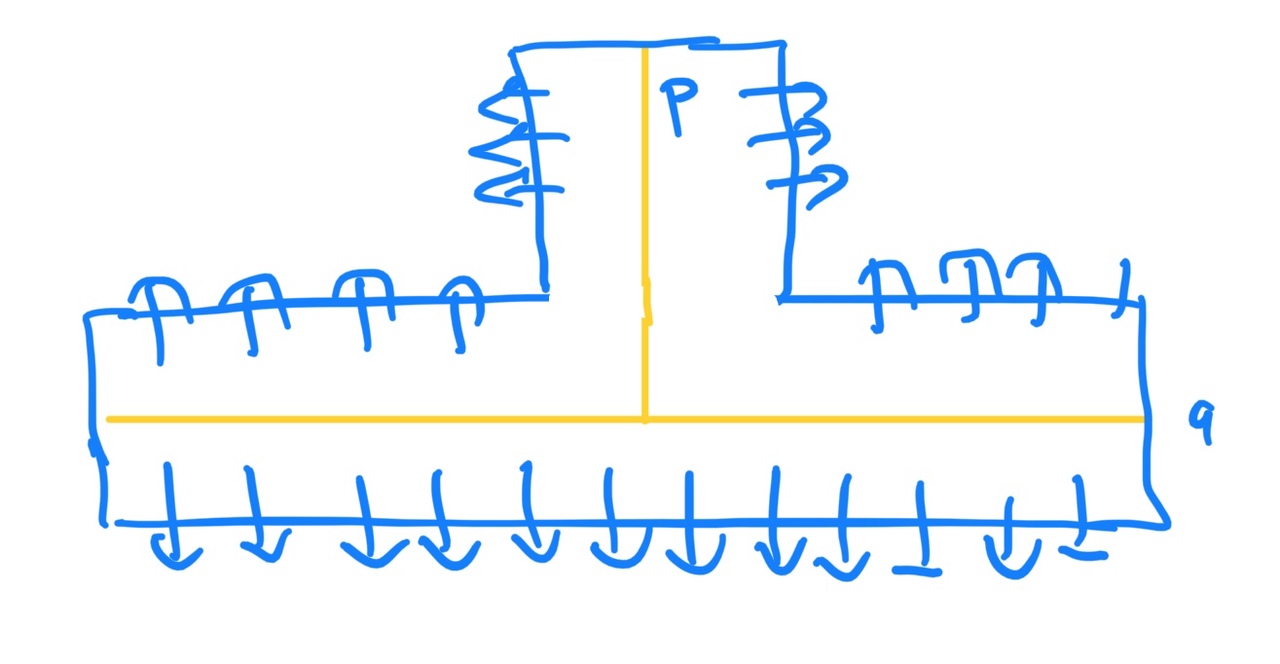}
\end{center}
Обозначим за $I_e\subset S(\Gamma)$ интервал ребра $e$. Для вершины $v\in\mathcal{V}(\Gamma)$ рассмотрим интервал $I_v$, полученный пересечением объединения $\{v\}\cup \bigcup_{e\in \mathcal{E}(\Gamma,v)-\{l(v)\}} I_e$ и карты $W_v$
\begin{equation*}
   I_v:= (\{v\}\cup \bigcup_{e\in \mathcal{E}(\Gamma,v)-\{l(v)\}} I_e)\cap W_v\subset S(\Gamma).
\end{equation*}
Каждый из интервалов $I_v$ гладко вложен в $\mathbb{S}(\mathcal{T})$. Для $d(v)=3$ он называется \textbf{"шляпкой"{}}.\\
Пересечение $W_{v}$ и $W_{w}$ есть объединение лент, отвечающих ребрам, соединяющим $v$ и $w$  
\begin{equation*}
    W_{v}\cap W_{w}=\bigcup_{e\in \mathcal{E}(\Gamma,v,w)} R_e,
\end{equation*}
\begin{equation*}
    \mathcal{E}(\Gamma,v,w):=\mathcal{E}(\Gamma,v)\cap \mathcal{E}(\Gamma,w).
\end{equation*}
\begin{ex}
На рисунках изображены покрытия картами "штанов"{} и проколотого тора.
\begin{center}
\includegraphics[width=6cm]{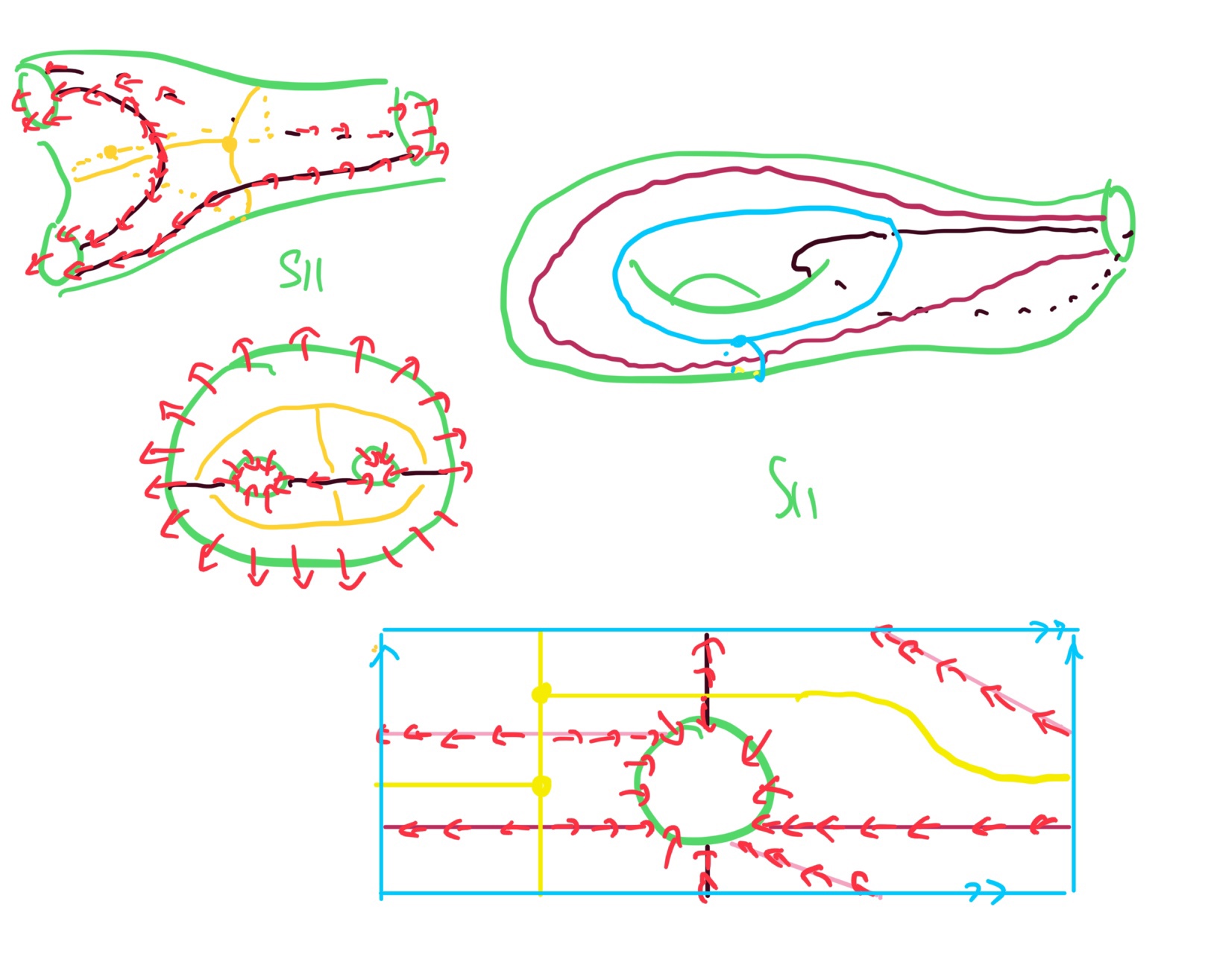}
\end{center}
\end{ex}
Покрытие $W_v$ определяет покрытие $S(\Gamma)$ координатными картами ${U_v}$, каждая из которых в свою очередь представляется как объединение открытого диска и нескольких копий $T^*[0,1]$.\\
Каждая из карт $U_v$ допускает симплектоморфизм на $\mathbb{V}:=(\mathbb{R}^2_{q,p},dp\wedge dq)$. который переводит 
\begin{equation*}
    I_e \mbox{ в } O_q:=\{p=0\},\mbox{ } \mathbb{D}_v \mbox{ в } \{p^2+q^2<1\}.
\end{equation*}
Таким образом, $W_v$ симплектоморфно $\mathbb{T}^*(I_e)$, так что $U_{v}$ являются картами Дарбу поверхности $\mathbb{S}(\mathcal{T})$. Для каждого $e\in\mathcal{E}(\Gamma)$ задано вложение $ T^*I_e\to\mathbb{S}$, продолжающее вложение $R_e\to \mathbb{S}$ и являющееся симплектоморфизмом на свой образ.
\begin{center}
\includegraphics[width=4cm]{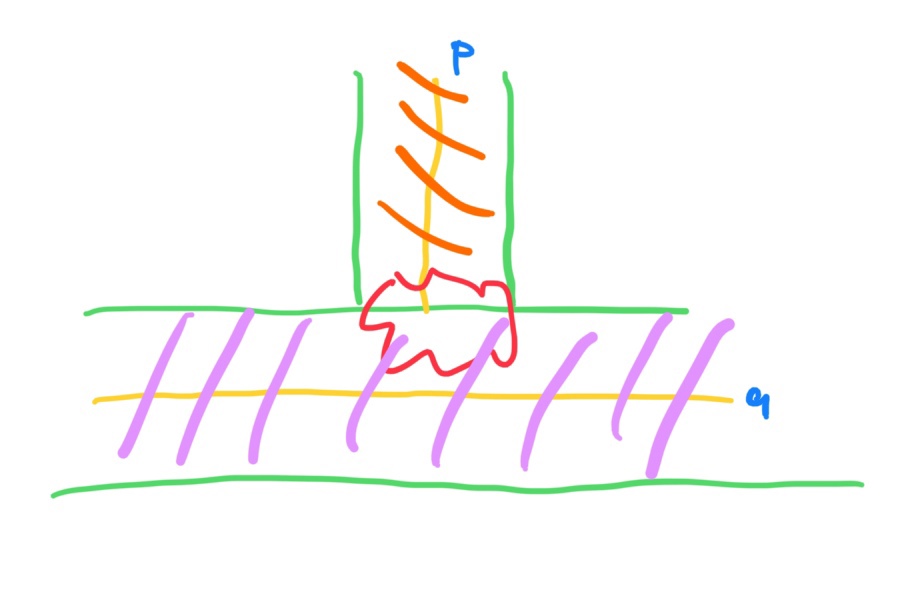}
\end{center}
Поток векторного поля $-X$ определяет ретракцию $\pi$ поверхности $\mathbb{S}(\Gamma)$ на граф $\Gamma$. \\
Каждая из карт $U_v$ инвариантна относительно этой проекции, то есть $\pi$ склеивается из
\begin{equation*}
   \pi_v: U_{v}\to \bigcup_{e\in \mathcal{E}(\Gamma,v)} I_e.
\end{equation*}
На пересечениях карт $\pi$ совпадает с проекцией касательного расслоения
\begin{equation*}
    T^*(J_e)\to J_e.
\end{equation*}
\subsection*{Кривые, локально допускающие производящее семейство}
Рассмотрим точную кривую $\gamma=(C,\iota,f)$ на $\mathbb{S}(\mathcal{T})$. Обозначим ее пересечение с картами $U_v$ как
\begin{equation*}
    \gamma_v=(C_v,\iota_v,f_v),
\end{equation*}
\begin{equation*}
    C_v:=\iota^{-1}(U_v), \mbox{ } \iota_v:=\iota|_{C_v},\mbox{ } f_v:=f|_{C_v}.
\end{equation*}
Каждая из карт $U_v$ снабжена выделенным симплектоморфизмом на $\mathbb{T}^*(I_e)$. Таким образом, производящее семейство $(E_e,F_e)$ на ребре $I_e$ порождает точную кривую $\gamma_{E_e,F_e}$ на $U_v$.
\begin{mydef}
Кривая $\gamma\subset \mathbb{S}(\mathcal{T})$ \textbf{локально допускает производящие семейства}, если
$\forall v\in \mathcal{V}(\Gamma)$ точная кривая $\gamma_v\subset \mathbb{T}^*(I_v)$, допускает производящее семейство $(E_v,F_v)\in Gf(I_v)$.
\end{mydef}
Это определение выглядит не слишком осмысленно.
Мы получим более содержательное объект, если для каждого ребра $e\in  \mathcal{E}(\Gamma,v,w)$ наложим условие согласованности на
\begin{equation*}
    (E_v,F_v) \mbox{ и } (E_w,F_w). 
\end{equation*}
Предположим для начала, что ребро $e$ не является ножкой ни для $v$, ни для $w$. Тогда семейства можно ограничить на $J_e$. В качестве условия согласованности можно потребовать, чтобы 
\begin{equation*}
  (E_w,F_w)|_{J_e}\cong^{st}  (E_v,F_v)|_{J_e}. 
 \end{equation*}
Чтобы наложить аналогичное условие в случае, когда $e$ является ножкой для одной из вершин, которые она соединяет, надо для каждой вершины $w$, $d(w)=3$ определить ограничение
\begin{equation*}
    \cdot |_{l(w)}:Gf(I_w)\to Gf(I_{l(w)}),
\end{equation*}
такое, что для каждого $(E,F)\in Gf(I_w)$ производящее семейство $(E,F)|_{l(w)}$ порождает
\begin{equation*}
    \gamma_{l(e)}=(C_{l(e)},\iota_{l(e)},F_{l(e)}),
\end{equation*}
\begin{equation*}
    C_{l(e)}:=\iota_{E,F}^{-1}(T^*(I_{l(e)})), \mbox{ } \iota_{l(e)}:=\iota_{E,F}|_{C_{l(e)}},\mbox{ } F_{l(e)}:=F|_{C_{l(e)}}.
\end{equation*}
Так как покрытие имеет стандартный вид, достаточно построить $\cdot |_{l}$ для перекрестка  $\mathbb{S}(\boldsymbol{\perp})$.\\
Перейдем от $\mathbb{S}(\boldsymbol{\perp})$ к плоскости $\mathbb{V}=(\mathbb{R}^2_{q,p},dp\wedge dq)$, которую можно рассматривать как кокасательное пространство к любой из двух координатных осей. Таким образом, производящему семейству на $O_q$ (или на $O_p$) отвечает точная кривая на $\mathbb{V}$. Мы докажем такое утверждение
\begin{prop}
\label{prop:turn}
Кривая $\gamma\subset\mathbb{V}$ допускает производящие семейства над $O_q$ и $O_p$ одновременно.
\end{prop}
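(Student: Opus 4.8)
The plan is to exploit the fact that the plane $\mathbb{V}=(\mathbb{R}^2_{q,p},dp\wedge dq)$ is simultaneously $T^*O_q$ and $T^*O_p$, and that the $90^\circ$ rotation
\begin{equation*}
  R:(q,p)\mapsto(p,-q)
\end{equation*}
is a symplectomorphism interchanging these two cotangent structures (it sends $O_q$ to $O_p$ and $pdq$ to $-qdp$, which differs from the Liouville form $pdq$ of $T^*O_p$ by the exact term $d(-pq)$). So a curve $\gamma$ with a generating family over $O_q$ has, after applying $R$, a curve $R(\gamma)$ with a generating family over $O_q'$ in the new chart; what must be shown is that the rotation of a curve generated by a family is again generated by a family. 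Because $R$ is not compactly supported, one cannot directly cite the Chekanov hamiltonian lifting theorem stated in the excerpt; the content of the proposition is precisely a ``rotation'' analogue of it.

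**Main step: building the rotated family explicitly.**
First I would treat the model case $\gamma = 0_{O_q}$, generated by $F\equiv 0$, and check the rotated curve $O_p$ is generated over $O_q$ by the quadratic function $F(q,\xi)=\xi^2$ (or by $-\xi^2$, depending on orientation conventions): its fibre critical set is $\xi=0$ and $\iota(q,0)=(q,0)$; but we want it as a curve in the $p$--$q$ plane viewed as $T^*O_q$, so in fact the relevant model is $F(q,\xi)=q\xi$ on $E=O_q\times\mathbb{R}$, whose fibre-critical locus $\{q=0\}$ maps to $\{(0,\xi)\}=O_p$. This already shows the phenomenon is nontrivial: the family acquires one extra fibre variable, reflecting that $O_p$ is a ``vertical'' fibre from the $O_q$ point of view over a single point. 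For a general $\gamma$ generated by $(E,F)\in Gf(O_q)$, the idea is to produce $(E',F')\in Gf(O_q)$ generating $R(\gamma)$ by the partial Legendre transform in the fibre-plus-base variable: set $E'=E\times\mathbb{R}$ with coordinate $t$ (the new base is again $O_q$, renamed), and
\begin{equation*}
  F'(q',\xi,t) := F(t,\xi) + q' t .
\end{equation*}
Then $\partial_t F' = \partial_q F(t,\xi) + q'$ and $\partial_\xi F' = \partial_\xi F(t,\xi)$, so the fibre-critical set $C_{E',F'}$ is $\{\partial_\xi F(t,\xi)=0,\ q' = -\partial_q F(t,\xi)\}$, which is a smooth curve diffeomorphic to $C_{E,F}$, and the induced map into $T^*O_q$ sends it to $(q',\ \partial_{q'}F') = (-\partial_q F(t,\xi),\ t)$, i.e. to the image of $C_{E,F}$ in $T^*O_q$ composed with $(q,p)\mapsto(-p,q)$, which is the rotation $R$ (up to the orientation sign, fixed by choosing $+q't$ versus $-q't$). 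One must check the transversality condition for $F'$: $\partial_{(\xi,t)}F'$ is transverse to $0$ because $\partial_\xi F$ was transverse to $0$ and the $\partial_t$ component contributes the free variable $q'$; this is a short rank computation. Then $(E',F')$ generates $R(\gamma)$, which by the correspondence between charts is a generating family for $\gamma$ over $O_p$.

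**The symmetric direction and the obstacle.**
The converse direction (a family over $O_p$ yields one over $O_q$) is the same construction with $R^{-1}$, so once one direction is established the ``simultaneously'' in the statement follows by symmetry; I would phrase the whole argument as: the rotation operator $\circlearrowleft$ on generating families defined by the formula above is well defined up to stable equivalence, intertwines the correspondence $Gf(O_q)\to\{$curves$\}$ with $Gf(O_p)\to\{$curves$\}$, and $\circlearrowleft^{-1}$ is given by the analogous formula, so the two classes of curves coincide. The main obstacle I anticipate is not the algebra of the Legendre-type formula but the bookkeeping: verifying that $F'$ really satisfies the fibrewise-transversality axiom of Definition~\ref{def:gf} globally (not just pointwise), matching orientations/signs so that the curve produced is $R(\gamma)$ and not some other quadrant rotation, and — if one wants the operator to descend to stable equivalence classes, which is needed for the compatibility conditions in later sections — checking that stabilising $(E,F)$ corresponds to stabilising $(E',F')$. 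A secondary subtlety is that $\gamma$ need not be a graph over either axis, so ``$\gamma$ admits a generating family over $O_q$'' already allows arbitrary folding; one should confirm the formula for $F'$ does not implicitly assume $\gamma$ is a graph, which it does not, since $F$ itself is allowed to be an arbitrary fibrewise-transverse family. I would also double-check the edge behaviour: on the overlaps the charts glue by the standard $T^*J_e\to J_e$ projection, so the rotation only needs to be understood on the model crossing $\mathbb{S}(\boldsymbol{\perp})$, exactly as the excerpt reduces it.
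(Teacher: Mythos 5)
Your construction is correct and lands on exactly the formula the paper uses: your $F'(q',\xi,t)=F(t,\xi)+q't$ is, up to the sign that selects the direction of rotation, the paper's rotation operator $F^{\circlearrowright}(q,\xi,u)=F(u,\xi)-g(u,q)$ on $E^{\circlearrowright}=L_q\times W_\xi\times L_u$. The difference is in how that formula is obtained and justified. The paper does not write it down directly: it first exhibits a quadratic generating family $G(u,l)=-\langle u,u\rangle-\langle l,l\rangle$ for the rotation $W(q,p)=(p,-q)$ in the sense of generating families for symplectomorphisms, feeds it into the general Chekanov operator $Ch_W$ of Section~\ref{sec:chekanov}, invokes Proposition~\ref{prop:chekanov} to conclude $\gamma_{M,P}=W(\gamma_{E,F})$, and only afterwards destabilizes the extra variable $v/2+t$ (which enters through a nondegenerate quadratic form) to arrive at $F^{\circlearrowright}$. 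You instead verify the destabilized formula by hand: the computation of $C_{E',F'}$, the identification with $C_{E,F}$, and the transversality rank argument you sketch are precisely the content of Proposition~\ref{prop:chekanov} specialized to $S=W$, so nothing is missing — your transversality claim does need the full differential of $\partial_\xi F$ in $(q,\xi)$, i.e. the original fibrewise-transversality of $(E,F)$, which is exactly what you use. What the paper's detour buys is reusability: the same $Ch_S$ machinery carries the proof that the operator descends to stable equivalence classes, the localization needed to define $\cdot|_{l}$ on $Gf^c(\boldsymbol{\perp})$, and the stable invertibility $((E,F)^\dag)^{-\dag}\cong^{st}(E,F)\oplus(\cdot)$, all of which you correctly flag as necessary bookkeeping but would have to prove separately for your directly-defined operator. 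Your direct route is shorter and more transparent for this one proposition; the paper's is longer here but amortized over the rest of Section~\ref{sec:main}. One small notational slip: $E'=E\times\mathbb{R}$ conflates the total space with the bundle structure — the new bundle has base $O_{q'}$ and fibre $W_\xi\times\mathbb{R}_t$, which is what your formula actually uses and coincides with the paper's $E^{\circlearrowright}$ since all bundles are trivialized.
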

Для этого нам надо научиться переписывать семейство над $O_q$ как семейство над $O_p$, то есть семейству $(E,F)\in Gf(O_q)$ сопоставить $(\widetilde{E},\widetilde{F})\in Gf(O_p)$ так, чтобы $ \gamma_{\widetilde{E},\widetilde{F}}=\gamma_{E,F}.$ Тогда
\begin{equation*}
    (E,F)|_{l(w)}:=(\widetilde{E},\widetilde{F})|_{p>0}.
\end{equation*}
и имеет смысл такое же условие согласованности, что и было наложено выше. \\
Рассмотрим симплектоморфизм $W$, поворачивающий $\mathbb{V}$ на угол $\pi$ по часовой стрелке
\begin{equation*}
    W(q,p):=(p,-q)
\end{equation*}
Зафиксируем отождествление $\psi:O_q\to O_p$, заданное ограничением симплектоморфизма $W$.\\
В терминах обратного образа $ ({E}^{\circlearrowright},{F}^{\circlearrowright}):=\psi^*(\widetilde{E},\widetilde{F})\in  Gf(O_q)$ получаем следующее условие
\begin{equation*}
  \gamma_{\widetilde{E},\widetilde{F}}=\gamma_{E,F}\Leftrightarrow  \gamma_{{E}^{\circlearrowright},{F}^{\circlearrowright}}= W(\gamma_{E,F}). 
\end{equation*}
Перед тем, как привести в разделе~\ref{sec:main} формулу для $\circlearrowright$, мы хотим описать, как ее получить. Наша конструкция работает в более широкой ситуации. Мы сделаем отступление и разберем в разделе~\ref{sec:chekanov} общий случай, чтобы мотивировать определение $\circlearrowright$ и понять его свойства.

\section{Оператор Чеканова}
\label{sec:chekanov}
\subsection*{Производящие семейства для симплектоморфизмов}
Изложенная ниже конструкция работает для произвольного симплектоморфизма $S:\mathbb{V}\to \mathbb{V}$ \textbf{допускающего производящее семейство}. Сейчас мы сформулируем это условие. \\
Отождествим $O_q$ с Евклидовым пространством
${L}=(\mathbb{R},g)$, а $V$ с прямой суммой
\begin{equation*}
    V=(L\oplus L,\langle\cdot,\cdot\rangle),
\end{equation*}
\begin{equation*}
    \langle (u,l),(s,v) \rangle = g(u,s)+g(l,v).
\end{equation*}
Подстановка в спаривание $\langle\cdot,\cdot\rangle$ определяет изоморфизм $V\leftrightarrow V^*$, применение которого обозначается добавлением $\tilde{\cdot}$. Тогда определен симплектоморфизм $\varpi_{\mathbb{V}}:\overline{\mathbb{V}} \times \mathbb{V}\xrightarrow{} \mathbb{T}^*V$
\begin{equation*}
       \varpi_{\mathbb{V}} (u,l,s,v)=(q_1,q_2,p_1,p_2):=(\frac{u+s}{2}, \frac{{l}+{v}}{2}, \widetilde{v}-\widetilde{l}, \widetilde{u}-\widetilde{s}).
\end{equation*}
Рассмотрим образ графика $gr_S:=\{(v,S(v))\}$ симплектоморфизма $S$ под действием $\varpi_{\mathbb{V}}$
\begin{equation*}
    \mathcal{L}_S:=\varpi_{\mathbb{V}}(gr_{S})\subset \mathbb{T}^*V.
\end{equation*}
Заметим, что $\mathcal{L}_S$ является Лагранжевым подмногообразием кокасательного расслоения $\mathbb{T}^*V$. 
\begin{mydef}
\textbf{Производящее семейство} для $S$ это такое $(H,G)\in Gf(V)$, что
\begin{equation*}
    \mathcal{L}_S=\{(q,p)|\mbox{ }\exists x\in H_q\mbox{, } \frac{\partial}{\partial q}G(x)= p,\mbox{ } \frac{\partial}{\partial \xi}G(x)= 0\}.
\end{equation*}
\end{mydef}
Рассмотрим следующие данные:
\begin{itemize}
    \item производящее семейство $(E,F)\in Gf(L)$;
    \item производящее семейство $(H,G)\in Gf(V)$, порождающее симплектоморфизм $S$.
\end{itemize}
Обозначим за $M(E;S)$ расслоение $E\times H\to L$. Подставим $E= L_q\times W_{\xi}$ и $H=V_{v,t}\times U_\chi$ 
\begin{equation*}
    M(E;S)=L_q\times W_{\xi}\times  L_v\times L_t\times U_{\chi}.
\end{equation*}
Определим функцию $P(F,S)\in\mathcal{C}^\infty(M(E;S))$, заданную следующим образом
\begin{equation*}
    P(F;S):=F(u,\xi)+G(l,t,\chi)-g(v,t)
    \mbox{, где}
\end{equation*}
\begin{equation*}
    u=q+v,\mbox{ } l=q+v/2.
\end{equation*}
\begin{mydef}
\textbf{Оператор Чеканова} для симплектоморфизма $S$ с фиксированным производящим семейством $(H,G)\in Gf(V)$ это следующее отображение $Ch_S:Gf(L)\to Gf(L)$ 
\begin{equation*}
    Ch_S(E,F)=(M(E;S), P(F;S))
\end{equation*}
\end{mydef}
\begin{prop}
Для стабильно эквивалентных семейств $(E_i,F_i)\in Gf(L)$ выполнено
\begin{equation*}
Ch_S(E_1,F_1)\cong^{st}Ch_S(E_2,F_2).
\end{equation*}
\begin{proof}
1) Предположим, что $(E_i,F_i)$ эквивалентны, то есть существует расслоенный диффеоморфизм, $\psi:E_1\to E_2$ для которого $F_1=\psi^*F_2$. Тогда $P_1=\eta^*P_2$ для  
    \begin{equation*}
        \eta:M_1\to M_2, \mbox{ } \eta(q,\xi,v,t,\chi):=(q,\psi(\xi),v,t,\chi)
    \end{equation*}
2) Если $(E'.F')$ это {стабилизация} семейства $(E,F)$, то $(M',P')$ является стабилизацией $(M,P)$, так как для векторного пространства $V$ и невырожденной квадратичной формы $q$ на $V$ верно
    \begin{equation*}
       Ch_S[(E,F)\oplus (V,Q)] = Ch_S(E,F)\oplus (V,Q),
    \end{equation*}
\end{proof}
\end{prop}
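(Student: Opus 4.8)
The plan is to reduce the claim to the two elementary moves that generate stable equivalence — replacing a family by an equivalent one, and stabilising by a nondegenerate quadratic form — and to check that $Ch_S$ is compatible with each of them. Indeed, if $(E_1,F_1)\cong^{st}(E_2,F_2)$, then by definition there is a nondegenerate quadratic form $Q$ on some $\mathbb{R}^n$ with $(E_1,F_1)\oplus(\mathbb{R}^n,Q)\cong(E_2,F_2)$. Granting (i) that $Ch_S$ carries a stabilisation to a stabilisation and (ii) that $Ch_S$ carries equivalent families to equivalent families, we then get
\begin{equation*}
Ch_S(E_1,F_1)\oplus(\mathbb{R}^n,Q)=Ch_S\bigl((E_1,F_1)\oplus(\mathbb{R}^n,Q)\bigr)\cong Ch_S(E_2,F_2),
\end{equation*}
which is exactly $Ch_S(E_1,F_1)\cong^{st}Ch_S(E_2,F_2)$.

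For (i) I would substitute $(E',F')=(E,F)\oplus(V,Q)$ into the definitions of $M(E;S)$ and $P(F;S)$. As trivialised bundles over $L$ one has $M(E';S)=E'\times H=M(E;S)\times V$, and since the new fibre variable enters $F'$ only through the summand $Q$, while the substitutions $u=q+v$ and $l=q+v/2$ are untouched, one reads off $P(F';S)=P(F;S)+Q$. Hence $Ch_S\bigl((E,F)\oplus(V,Q)\bigr)=Ch_S(E,F)\oplus(V,Q)$: this step is pure unwinding of notation.

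For (ii), let $\psi\colon E_1\to E_2$ be a fibrewise diffeomorphism over $L$ with $F_1=\psi^*F_2$, written $\psi(a,\xi)=(a,\psi_a(\xi))$, so that $F_1(a,\xi)=F_2(a,\psi_a(\xi))$. The candidate isomorphism of Chekanov families is
\begin{equation*}
\eta\colon M_1\to M_2,\qquad \eta(q,\xi,v,t,\chi):=(q,\psi_{q+v}(\xi),v,t,\chi),
\end{equation*}
a fibrewise diffeomorphism over $L$ whose inverse replaces $\psi_{q+v}$ by $\psi_{q+v}^{-1}$. Since in $P(F;S)=F(u,\xi)+G(l,t,\chi)-g(v,t)$ the family $F$ is evaluated at the shifted base point $u=q+v$ while the $G$- and $g$-terms do not involve $\xi$, we get $F_1(u,\xi)=F_2(u,\psi_{q+v}(\xi))$ and hence $P_1=\eta^*P_2$. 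The one subtlety, and the only place where care is needed, is exactly this base shift: $\eta$ must apply $\psi$ at the point $u=q+v$, not at $q$, for the $F$-terms to match; with that bookkeeping the verification is immediate, and for a base-independent $\psi$ it collapses to the naive formula $\eta(q,\xi,v,t,\chi)=(q,\psi(\xi),v,t,\chi)$.

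Throughout one tacitly uses that $Ch_S$ indeed lands in $Gf(L)$ — fibrewise derivative transverse to zero, fibre diffeomorphic to a Euclidean space — which is part of the construction of the Chekanov operator rather than of this proposition. So the real content sits upstream, in showing that $Ch_S$ is well defined and that the family $Ch_S(E,F)$ generates the image $S(\gamma_{E,F})$; granting that, the present statement is a short formal consequence of moves (i) and (ii).
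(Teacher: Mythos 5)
Your proof is correct and follows essentially the same route as the paper: reduce stable equivalence to the two elementary moves (equivalence and stabilization), check that $Ch_S$ commutes with each, with the stabilization case being pure unwinding of the definition of $(M(E;S),P(F;S))$ and the equivalence case handled by a fibrewise diffeomorphism $\eta\colon M_1\to M_2$. The one place you are more careful than the paper is worth keeping: the paper writes $\eta(q,\xi,v,t,\chi)=(q,\psi(\xi),v,t,\chi)$ without specifying the base point at which the fibrewise map $\psi$ acts, and since $F$ enters $P(F;S)$ only through the shifted argument $u=q+v$, your formula $\eta(q,\xi,v,t,\chi)=(q,\psi_{q+v}(\xi),v,t,\chi)$ is the one for which $P_1=\eta^*P_2$ actually holds when $\psi$ is genuinely base-dependent.
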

\begin{ex}
Заметим, что $\varpi_{\mathbb{V}}$ переводит диагональ $\Delta_V$ в $0_V$, так что тождественный симплектоморфизм $Id_V$ допускает производящую функцию $G\equiv 0$. Применим оператор $Ch_{Id}$
\begin{equation*}
    P(F;Id_V):=F(q+v,\xi)-g(v,t).
\end{equation*}
Легко видеть, что $Ch_{Id_V}(E,F)$ порождает кривую $\gamma_{E,F}$. Заметим, что более того 
\begin{equation*}
    (E,F) \cong^{st} Ch_{Id_V}(E,F). 
\end{equation*} 
Координата $q$ определяет отождествление прямой $L$ с множеством вещественных чисел. Так как разность $f(q,\xi,v):=F(q+v,\xi)-F(q,\xi)$ тождественно зануляется при $v=0$, корректно определено следующее гладкое отображение $\psi$ из расслоения $M$ в себя
\begin{equation*}
    \psi: (q,\xi,v,t)\mapsto (q,\xi,v,t+\frac{f(q,\xi,v)}{v}).
\end{equation*}
Очевидно, это расслоенный диффеоморфизм, для которого верно $\psi^*P(F,Id_V)=F.$
\end{ex}
Оператор $Ch_S$ был придуман с использованием описания действия симплектической редукции на Лагранжевых подмногообразиях, допускающих производящие семейства.  Аналогичная процедура впервые была применена в статье~\cite{Chekanov}. 
\subsection*{Главное свойство оператора $Ch$}
Зафиксируем $(E,F)$, $S$ и $(H,G)$. Обозначим за $\phi$ ограничение на $C_{M,P}$ отображения 
\begin{equation*}
    M\to E, \mbox{ } (q,\xi,v,t,\chi)\mapsto (q+v,\xi)
\end{equation*}
\begin{prop}
\label{prop:chekanov}
$\phi$ является диффеоморфизмом на $C_{E,F}$ и определяет равенство кривых
\begin{equation*}
 \iota_{M,P}= S\circ\iota_{E,F}\circ \phi\Rightarrow \gamma_{M,P}=S(\gamma_{E,F}).
\end{equation*}
\begin{proof}
Множество  $C_{M,P}$ послойных критических точек $P$ задается как
\begin{equation*}
   Z:= (q,\xi,v,t,\chi)\in C_{M,P}\Leftrightarrow \begin{cases}
      F_\xi (u,\xi)=0,\\
      G_{\chi} (l,t,\chi)=0,\\
      P_{v}(Z)=0,\\
      P_{t}(Z)=0,
    \end{cases}, 
    \mbox{ где } 
    \begin{cases}
      u=q+v,\\
      l=q+v/2.
    \end{cases}
\end{equation*}
Последние два уравнения образуют следующую систему уравнений на переменные $q$, $v$ и $t$
\begin{equation*}
    \centering
    \begin{cases}
      P_{v}(Z)=0,\\
      P_{t}(Z)=0
    \end{cases} 
    \Leftrightarrow
    \begin{cases}
     F_q(u,\xi)+\frac{1}{2}G_{v}(l,t,\chi)-\tilde{t}=0,\\
     G_{t}(l,t,\chi)-\tilde{v}=0,
    \end{cases}
    \Leftrightarrow
    \begin{cases}
      G_{v}(l,t,\chi)=2\tilde{t}-2F_q(u,\xi),\\
      G_{t}(l,t,\chi)=\tilde{v}.
    \end{cases}
\end{equation*}
Сопоставим четверке $(q,\xi,v,t)\in L\times W\times  L\times L$ кокасательный вектор 
\begin{equation*}
    T:=(q+v/2, t,2\tilde{t}-2F_q(u,\xi),\tilde{v})\in T^*V.
\end{equation*}
Так как $(H,G)$ порождает Лагранжево подмногообразие $\mathcal{L}_S$, следующая система на $\chi\in U$
\begin{equation*}
    \begin{cases}
      G_{\chi} (l,t,\chi)=0,\\
      P_{v}(Z)=0,\\
      P_{t}(Z)=0,
    \end{cases}
    \Leftrightarrow
    \begin{cases}
      G_{\chi} (l,t,\chi)=0,\\
      G_{v}(l,t,\chi)=2\tilde{t}-2F_q(u,\xi),\\
      G_{t}(l,t,\chi)=\tilde{v}
    \end{cases}
\end{equation*}
имеет решение тогда и только тогда, когда $T\in \mathcal{L}_S$. В этом случае, решение $\chi$ единственно. \\
Пользуясь равенством $\mathcal{L}_S:=\varpi_{\mathbb{V}}(gr_{S})$ и формулой для обращения симплектоморфизма $\varpi_{\mathbb{V}}$
\begin{equation*}
    \varpi_{\mathbb{V}}^{-1}(q_1,q_2,p_1,p_2)=(q_1+\tilde{p}_2/2,q_2-\tilde{p}_1/2,q_1-\tilde{p}_2/2,\tilde{p}_1/2+q_2),
\end{equation*}
переписываем это условие в терминах $\varpi_{\mathbb{V}}^{-1}(T)=(q+v,\widetilde{F_q(u,\xi)},q,2t-\widetilde{F_q(u,\xi)})$ как
\begin{equation*}
    T\in \mathcal{L}_S\Leftrightarrow\varpi_{\mathbb{V}}^{-1}(T)\in gr_{S}\Leftrightarrow
\end{equation*}
\begin{equation*}
   \Leftrightarrow S(q+v,\widetilde{F_q(u,\xi)})=(q,2t-\widetilde{F_q(u,\xi)}).
\end{equation*}
Пользуясь выражением $P_q(Z)=F_q(q+v,\xi)+G_v(q+v/2,t,\chi)$ получаем равносильность
\begin{equation*}
     T\in \mathcal{L}_S\Leftrightarrow S(u,\widetilde{F_q(u,\xi)})=(q,\widetilde{P_q(Z)}).
\end{equation*}
Таким образом, проекция вдоль $L\times U$ определяет диффеоморфизм $C_{M,P}$ и подмногообразия
\begin{equation*}
G:=\{(q,\xi,v)|\mbox{ } F_\xi (u,\xi)=0,\mbox{ }  S(u,\widetilde{F_q(u,\xi)})=(q,\widetilde{P_q(Z)}) \mbox{ где }u=q+v\}\subset L\times W\times  L.
\end{equation*}
Ограничение отображения $L\times W\times  L\to E$, $(q,\xi,v)\to (q+v,\xi)$ на $G$ бьет в $C_{E,F}$. Оно является диффеоморфизмом, так как  допускает гладкое обратное отображение, заданное 
\begin{equation*}
    (u,\xi)\mapsto (q,\xi,u-q), \mbox{ где } q:=pr_L\circ S\circ \iota_{E,F}(u,\xi).
\end{equation*}
Таким образом $\phi$ является композицией диффеоморфизмов. Осталось заметить, что
\begin{equation*}
    \iota_{M,P}(q,\xi,v,t,\chi)=(q,\widetilde{P_q(Z)})=S(u,\widetilde{F_q(u,\xi)})=S(\iota_{E,F}(u,\xi))= S\circ\iota_{E,F}\circ \phi (q,\xi,v,t,\chi).
\end{equation*}
\end{proof}
\end{prop}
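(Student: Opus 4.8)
The plan is to unwind the system of equations cutting out the fibrewise critical locus $C_{M,P}$ and to recognize in it, verbatim, the condition that a suitable cotangent vector over $V$ lies on the Lagrangian $\mathcal{L}_S$. Once that is done, substituting $\mathcal{L}_S = \varpi_{\mathbb{V}}(gr_S)$ converts the condition into an explicit graph relation for $S$, from which both the diffeomorphism statement and the identity $\iota_{M,P} = S\circ\iota_{E,F}\circ\phi$ can be read off; the conclusion $\gamma_{M,P} = S(\gamma_{E,F})$ is then immediate.

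First I would write out the four fibrewise partial derivatives of $P = F(u,\xi) + G(l,t,\chi) - g(v,t)$ on $M(E;S) = L_q\times W_\xi\times L_v\times L_t\times U_\chi$, using $u = q+v$, $l = q+v/2$ and $\partial_v\, g(v,t) = \widetilde t$, $\partial_t\, g(v,t) = \widetilde v$. A chain-rule computation shows that $(q,\xi,v,t,\chi)\in C_{M,P}$ if and only if
\begin{gather*}
F_\xi(u,\xi) = 0,\qquad G_\chi(l,t,\chi) = 0,\\
G_v(l,t,\chi) = 2\widetilde t - 2F_q(u,\xi),\qquad G_t(l,t,\chi) = \widetilde v,
\end{gather*}
the last two equations being $P_v = 0$ and $P_t = 0$ after isolating $G_v$ and $G_t$. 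The conceptual step is to notice that the last three of these equations say precisely — this is the definition of $(H,G)$ being a generating family for $S$ — that the cotangent vector $T := (q+v/2,\ t,\ 2\widetilde t - 2F_q(u,\xi),\ \widetilde v)\in T^*V$ lies in $\mathcal{L}_S$, and that for such a $T$ the witnessing fibre variable $\chi$ is unique and depends smoothly on the other data. Substituting $\mathcal{L}_S = \varpi_{\mathbb{V}}(gr_S)$ and applying the inversion formula for $\varpi_{\mathbb{V}}$ to $T$ — a short computation gives $\varpi_{\mathbb{V}}^{-1}(T) = (u,\ \widetilde{F_q(u,\xi)},\ q,\ 2t - \widetilde{F_q(u,\xi)})$ — this turns into the graph relation $S(u,\widetilde{F_q(u,\xi)}) = (q,\ 2t - \widetilde{F_q(u,\xi)})$.

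Next I read off the consequences. The equation $F_\xi(u,\xi) = 0$ says that $\phi(q,\xi,v,t,\chi) = (u,\xi)$ lies in $C_{E,F}$, with $\iota_{E,F}(u,\xi) = (u,\widetilde{F_q(u,\xi)})$; and since $P_q = F_q(u,\xi) + G_v(l,t,\chi) = 2\widetilde t - F_q(u,\xi)$ along $C_{M,P}$, one has $\iota_{M,P}(q,\xi,v,t,\chi) = (q,\widetilde{P_q}) = (q,\ 2t - \widetilde{F_q(u,\xi)})$, so the graph relation above reads exactly $\iota_{M,P} = S\circ\iota_{E,F}\circ\phi$ on $C_{M,P}$. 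For the diffeomorphism claim I would mirror the smooth case: the same relation forces $q = pr_L\circ S\circ\iota_{E,F}(u,\xi)$, and then $t$ — hence everything except $\chi$ — to be a smooth function of $(u,\xi)$; projecting $C_{M,P}$ along the $(t,\chi)$-directions therefore identifies it with the submanifold $\{(q,\xi,v) : F_\xi(u,\xi) = 0,\ q = pr_L\circ S\circ\iota_{E,F}(u,\xi)\}$ of $L_q\times W_\xi\times L_v$, on which $(q,\xi,v)\mapsto(q+v,\xi)$ is a diffeomorphism onto $C_{E,F}$ with explicit smooth inverse $(u,\xi)\mapsto(q,\xi,u-q)$, and $(t,\chi)$ are then recovered uniquely. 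Composing these identifications, $\phi$ is a diffeomorphism $C_{M,P}\to C_{E,F}$, whence $\gamma_{M,P} = \iota_{M,P}(C_{M,P}) = S\big(\iota_{E,F}(\phi(C_{M,P}))\big) = S(\gamma_{E,F})$. If one also wants the potentials to correspond, one checks along $C_{M,P}$ that $P$ agrees with $F$ composed with that projection, up to the compactly supported shift coming from $S^*\lambda - \lambda$.

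I expect the middle step to be the main obstacle. The bookkeeping of the identification $\tilde{\cdot}\colon V\leftrightarrow V^*$ and of the signs and factors of $1/2$ carried by $\varpi_{\mathbb{V}}$ and $\varpi_{\mathbb{V}}^{-1}$ is error-prone and has to be carried out with care. More substantively, obtaining the uniqueness and smooth dependence of the witnessing $\chi$ — which is what upgrades $\phi$ from a surjective local diffeomorphism to a genuine diffeomorphism — is exactly the place where the hypothesis that $(H,G)$ generates the symplectomorphism $S$, and not merely some Lagrangian submanifold of $T^*V$, is used. Everything on either side of that step is routine: a chain-rule computation before it and a substitution after it.
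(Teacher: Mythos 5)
Your proposal is correct and follows essentially the same route as the paper's own proof: you unwind the fibrewise critical-point equations of $P$, recognize the last three as the condition $T\in\mathcal{L}_S$ for the same cotangent vector $T$, invert $\varpi_{\mathbb{V}}$ to obtain the identical graph relation $S(u,\widetilde{F_q(u,\xi)})=(q,2t-\widetilde{F_q(u,\xi)})$, and then read off the diffeomorphism via the same projection along the $(t,\chi)$-directions with the same explicit inverse $(u,\xi)\mapsto(q,\xi,u-q)$. The only additions are cosmetic (the remark about potentials and $S^*\lambda-\lambda$), so there is nothing substantive to reconcile.
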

\subsection*{Локализация  оператора Чеканова}
Оператор Чеканова $Ch_S$ не обладает важным свойством локальности: при малом изменении $F$, которое может вообще не поменять $C_{E,F}$ и $\iota_{E,F}$, функция $P$ меняется драматически.  
\begin{ex}
Рассмотрим производящее семейство $(E,F)\in Gf(L)$ и разложение
\begin{equation*}
    F=f+\epsilon,
\end{equation*} 
для которого функция $\epsilon\in\mathcal{C}^\infty(E)$ имеет компактный носитель. Тогда точная кривая $\gamma_{E,F}$ совпадает с подмножеством $\gamma_{E,f}\subset V$ вне компакта, а значит это верно и для их образов
\begin{equation*}
    S(\gamma_{E,F})\cap (V-K) = S(\gamma_{E,f}) \cap (V-K).
\end{equation*}
Тем не менее, ограничение на слой $M_q$ разности 
\begin{equation*}
 \delta|_{M_q}:=(P(F)-P(f))|_{M_q}=\epsilon(u,\xi)   
\end{equation*}
не зануляется тождественно ни при каких значениях параметра $q$.
\end{ex}
До конца этого раздела мы для каждого такого $\epsilon$ построим расслоенный диффеоморфизм $\psi:M\to M$, после применения которого носитель $\delta$ становится компактен.\\
Мы будем использовать утверждение, которое сейчас докажем, в более общей ситуации. Зафиксируем произвольное разложение $F=f+\epsilon$ без условий на $f$ и $\epsilon$. Положим, как раньше 
\begin{equation*}
    (M,P):=(M(E,W),P(F,W)).
\end{equation*}
Зафиксируем произвольную функцию $\phi\in\mathcal{C}^\infty(L)$ и определим новое $\widehat{P}\in\mathcal{C}^\infty(M)$ как
\begin{equation*}
    \widehat{P}(q,\xi,v,t,\chi)=f(u,\xi)+\phi(q)\epsilon(u,\xi)+G(l,t,\chi)-g(v,t)
    \mbox{, где}
\end{equation*}
\begin{equation*}
    u=q+v,\mbox{ } l=q+v/2.
\end{equation*}
Естественно, для $\phi\equiv 1$ мы получаем предыдущую функцию $P$. С другой стороны, если носители $\phi$ и $\epsilon$ компактны, $\widehat{P}$ совпадает с $P(f)$ вне компакта. Сейчас мы сформулируем условия, при которых $(M,\widehat{P})$ порождает $\gamma_{M,P}$. Для начала, введем для каждого $r\in L$
\begin{equation*}
    F_{r}:= f+\phi(r)\epsilon \in \mathcal{C}^\infty (M). 
\end{equation*}
Теперь рассмотрим подмножество $O\subset L$, определенное следующим образом 
\begin{equation*}
    O:=pr_L\circ S\circ\iota_{E,F}(C_{E,F}\cap supp\mbox{ } \epsilon)\cup \{r\in L|\mbox{ } \exists (q,\xi)\in C_{E,F_r}\cap supp\mbox{ } \epsilon\mbox{ : } pr_L\circ S\circ\iota_{E,F_r}(q,\xi)=r\}. 
\end{equation*}
\begin{prop}
\label{prop:localization}
Если $\phi\equiv 1$ на некоторой открытой окрестности $Op(O)$, то $\gamma_{M,P}=\gamma_{M,\widehat{P}}.$
\begin{proof}
Мы докажем, что $P$ и $\widehat{P}$ тождественно совпадают на открытой окрестности 
\begin{equation*}
    C_{M,P}\cup  C_{M,\widehat{P}}\subset M.
\end{equation*}
Из этого сразу следует, что $C_{M,P}=C_{M,\widehat{P}}$ и $\iota_{M,P}\equiv\iota_{M,\widehat{P}}$, а значит и равенство кривых. \\
1) Заметим, что $\forall r\in L$ функции $F$ и $F_{r}$ тождественно равны на окрестности $C_r\cup \widehat{C}_r$, где
\begin{equation*}
    C_r=\{(q,\xi)\in C_{E,F}|\mbox{ }pr_L\circ S\circ\iota_{E,F}(q,\xi)=r\}, \mbox{ } \widehat{C}_r:= \{(q,\xi)\in C_{E,F_r}|\mbox{ } pr_L\circ S\circ\iota_{E,F_r}(q,\xi)=r\}. 
\end{equation*}
Действительно $F-F_r=(1-\phi)\epsilon$, $\phi|_{Op(O)}\equiv 1$, а для $r\notin O$ по определению $C_r\cup \widehat{C}_r \subset E-supp\mbox{ }\epsilon.$\\
Тогда $C_r=\widehat{C}_r$ и кроме того тождественно равны $pr_{L^*}\circ S\circ\iota_{E,F}|_{C_r}=pr_{L^*}\circ S\circ\iota_{E,F_r}|_{C_r}.$\\
2) Для $Q\in \mathcal{C}^\infty(M)$ обозначим множество критических точек ограничения $Q$ на слой ${M_{q_0}}$ за
\begin{equation*}
    X_{q_0}(Q):=C_{M,Q}\cap M_{q_0}=\{(q_0,\xi,l,v,\chi)|\mbox{ } dQ|_{ W\times  L\times L\times U}=0\}\subset M_{q_0}.
\end{equation*}
Из доказательства утверждения \ref{prop:chekanov} следует, что для каждой функции $H\in\mathcal{C}^\infty(E)$ выполнено
\begin{equation*}
    X_{q_0}(P(H))=\{(\xi,u-q,p/2+\frac{\partial}{\partial q}H(u,\xi)/2)|\mbox{ } (u,\xi)\in C_{E,H},\mbox{ } q_0=pr_L\circ S\circ\iota_{E,H}(u,\xi), p=pr_{L^*}\circ S\circ\iota_{E,H}(u,\xi)\}.
\end{equation*}
3) Докажем, что множества $X_{q_0}(P)$ и $X_{q_0}(\widehat{P})$ совпадают. Заметим, что
\begin{equation*}
     \widehat{P}|_{M_{q_0}}= [f(u,\xi)+\phi(q_0)\epsilon(u,\xi)+G(l,t,\chi)-g(v,t)]|_{M_{q_0}}=P(F_{q_0})|_{M_{q_0}}.
\end{equation*}
Таким образом, используя два предыдущих пункта, получаем, что 
\begin{equation*}
    X_{q_0}({P}) = \{(\xi,u-q,p/2+\frac{\partial}{\partial q}F(u,\xi)/2)|\mbox{ } (u,\xi)\in C_{q_0},\mbox{ } p=pr_{L^*}\circ S\circ\iota_{E,F}(u,\xi)\}=  
\end{equation*}
\begin{equation*}
   = \{(\xi,u-q,p/2+\frac{\partial}{\partial q}F_{q_0}(u,\xi)/2)|\mbox{ } (u,\xi)\in \widehat{C}_{q_0},\mbox{ } p=pr_{L^*}\circ S\circ\iota_{E,F_{q_0}}(u,\xi)\} =X_{q_0}({P}(F_{q_0})) =X_{q_0}(\widehat{P}).
\end{equation*}
4) Осталось проверить, что $P$ и $\widehat{P}$ тождественно совпадают на окрестности $X_{q}(P)$. Так как 
\begin{equation*}
    P-\widehat{P}=(1-\phi(q))\epsilon(u,\xi),
\end{equation*}
достаточно для $(q,\xi,l,v,\chi)\in X_{q}(P)$, $q\notin Op(O)$ заметить, что $ (u,\xi)\in C_{q}\subset E-supp\mbox{ }\epsilon.$
\end{proof}
\end{prop}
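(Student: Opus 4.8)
The plan is to establish something purely local and slightly stronger: that $P$ and $\widehat P$ literally coincide on an open set $U\subset M$ containing both vertical critical loci $C_{M,P}$ and $C_{M,\widehat P}$. This suffices: coinciding on $U\supset C_{M,P}\cup C_{M,\widehat P}$ forces, via the equality of all partial derivatives there, that $C_{M,P}=C_{M,\widehat P}$, that $\iota_{M,P}=\iota_{M,\widehat P}$ along this common locus, that $P|_{C_{M,P}}=\widehat P|_{C_{M,\widehat P}}$, and (the vertical derivative of $\widehat P$ equalling that of the genuine generating family $P$ along $C_{M,P}\subset U$) that $(M,\widehat P)$ is itself a legitimate generating family; hence $\gamma_{M,P}=\gamma_{M,\widehat P}$. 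Equivalently, it is enough to check that the closed set $D:=\overline{\{x\in M\mid P(x)\neq\widehat P(x)\}}$ is disjoint from $C_{M,P}\cup C_{M,\widehat P}$, and then take $U:=M\setminus D$.

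The first step is to pin down $D$. Directly from the formulas, $P(q,\xi,v,t,\chi)-\widehat P(q,\xi,v,t,\chi)=(1-\phi(q))\,\epsilon(q+v,\xi)$, so $\{P\neq\widehat P\}$ is contained in $\pi_M^{-1}(\mathrm{supp}(1-\phi))\cap\rho^{-1}(\mathrm{supp}\,\epsilon)$, where $\pi_M(q,\xi,v,t,\chi)=q$ and $\rho\colon M\to E$, $(q,\xi,v,t,\chi)\mapsto(q+v,\xi)$, is the map appearing in Proposition~\ref{prop:chekanov} (renamed here to avoid clashing with the cutoff $\phi$). That set is closed, hence contains $D$. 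The hypothesis ``$\phi\equiv1$ on the open set $Op(O)$'' enters exactly here: it forces $\mathrm{supp}(1-\phi)\subset L\setminus Op(O)\subset L\setminus O$. Consequently every $x\in D$ has base point $\pi_M(x)\notin O$ and shadow $\rho(x)\in\mathrm{supp}\,\epsilon$.

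Next I would recall, from the analysis in the proof of Proposition~\ref{prop:chekanov}, the fibrewise description of a Chekanov critical set: for any $H\in\mathcal C^\infty(E)$, a point $x$ over $q_0\in L$ belongs to $C_{M,P(H)}$ iff $\rho(x)\in C_{E,H}$ and $q_0=\mathrm{pr}_L\circ S\circ\iota_{E,H}(\rho(x))$, its $t,\chi$ coordinates being then uniquely determined. For $C_{M,P}=C_{M,P(F)}$ this applies with $H=F$; for $\widehat P$ I would first use the elementary identity $\widehat P|_{M_{q_0}}=P(F_{q_0})|_{M_{q_0}}$ — valid because $q\equiv q_0$ on $M_{q_0}$, so $\phi(q)\equiv\phi(q_0)$ — which gives $C_{M,\widehat P}\cap M_{q_0}=C_{M,P(F_{q_0})}\cap M_{q_0}$, i.e.\ the same description with $H=F_{q_0}$. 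Now let $x\in D$ also lie in a critical locus. If $x\in C_{M,P}$, then $(u,\xi):=\rho(x)\in C_{E,F}\cap\mathrm{supp}\,\epsilon$ and $\pi_M(x)=\mathrm{pr}_L\circ S\circ\iota_{E,F}(u,\xi)$, placing $\pi_M(x)$ in the first of the two sets making up $O$ — contradicting $\pi_M(x)\notin O$. If $x\in C_{M,\widehat P}$, put $q_0:=\pi_M(x)$; then $(u,\xi):=\rho(x)\in C_{E,F_{q_0}}\cap\mathrm{supp}\,\epsilon$ with $\mathrm{pr}_L\circ S\circ\iota_{E,F_{q_0}}(u,\xi)=q_0$, placing $q_0$ in the second set making up $O$ — again a contradiction. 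Hence $D\cap(C_{M,P}\cup C_{M,\widehat P})=\varnothing$, which by the first paragraph completes the argument.

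I expect the only real obstacle to be the bookkeeping in the last two paragraphs rather than any conceptual difficulty. One must be careful to invoke the Chekanov description of $C_{M,P(H)}$ with the correct $H$ in each fibre — the frozen $F_{q_0}$ for $\widehat P$, not the globally varying $F$ — and to match the two constituent sets in the definition of $O$ to the two cases $x\in C_{M,P}$ and $x\in C_{M,\widehat P}$; indeed $O$ is assembled from precisely those two sets for exactly this purpose. The genuinely topological input is light: the factorization of $P-\widehat P$, the closedness of $\mathrm{supp}\,\epsilon$ (which upgrades ``disjoint'' to ``disjoint with an open neighbourhood''), and the single use of the hypothesis to push $\mathrm{supp}(1-\phi)$ off $O$; no symplectic geometry beyond Proposition~\ref{prop:chekanov} is needed.
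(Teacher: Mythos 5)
Your proposal is correct and follows essentially the same route as the paper's own proof: the factorization $P-\widehat P=(1-\phi(q))\,\epsilon(u,\xi)$, the fibrewise freezing identity $\widehat P|_{M_{q_0}}=P(F_{q_0})|_{M_{q_0}}$, the description of the fibrewise critical locus from Proposition~\ref{prop:chekanov}, and the matching of the two constituent sets of $O$ to the two critical loci are all exactly the paper's ingredients. The only difference is presentational: you argue by contradiction that the closed set $\overline{\{P\neq\widehat P\}}$ misses $C_{M,P}\cup C_{M,\widehat P}$, whereas the paper first establishes $X_{q_0}(P)=X_{q_0}(\widehat P)$ fibre by fibre and then verifies the neighbourhood condition.
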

Теперь мы в качестве следствия получим утверждение, анонсированное выше. 
\begin{prop}
Рассмотрим производящее семейство $(E,F)\in Gf(L)$, разложенное как  
\begin{equation*}
    F=f+\epsilon, \mbox{ где } supp\mbox{ }\epsilon \mbox{ компактен}.
\end{equation*}
Тогда существует расслоенный диффеоморфизм $\psi,$ для которого компактен носитель
\begin{equation*}
    \delta:=\psi^*P-P(f).
\end{equation*}
\begin{proof}
Конструкция проходит в $2$ шага: сначала мы строим $\phi$, для которого $\widehat{P}$ и $P(f)$ совпадают вне компакта, а потом расслоенный диффеоморфизм $\psi$, переводящий $P$ в $\widehat{P}$. \\
1) Построим функцию $\phi$. Так как носитель $\epsilon$ компактен, объединение подмножеств 
\begin{equation*}
    pr_L\circ S\circ\iota_{E,F}(C_{E,F}\cap supp\mbox{ } \epsilon)\cup  pr_L(supp\mbox{ } \epsilon) \subset L
\end{equation*}
содержится в некотором отрезке $I$. Если $\phi\equiv 1$ на некоторой открытой окрестности $I$, то
\begin{equation*}
    O=pr_L\circ S\circ\iota_{E,F}(C_{E,F}\cap supp\mbox{ } \epsilon)\subset I,
\end{equation*}
а значит для $f$, $\epsilon$ и $\phi$ применима предыдущее предложение. Теперь рассмотрим $\phi\in \mathcal{C}^\infty (L)$
\begin{equation*}
    0\le \phi(q)\le 1
\end{equation*}
\begin{equation*}
    \phi|_{Op(I)}\equiv 1, \mbox{ } \phi|_{L-J}\equiv 0
\end{equation*}
для некоторого интервала $J\subset L$, содержащего $Op(I)$. Так как носители $\epsilon$ и $\phi$ компактны, разность $\widehat{P}-P(f)=\phi(q)\epsilon(u,\xi)$ имеет компактный носитель.\\
2) Осталось построить расслоенный диффеоморфизм, переводящий $P$ в $\widehat{P}$. Для семейства 
\begin{equation*}
    \phi_t:=(1-t)\phi+ t,
\end{equation*} 
связывающего $\phi_0\equiv\phi$ и $\phi_1\equiv 1$, выполнены условия ${\phi_t}|_{Op(I)}\equiv 1, \mbox{ } {\phi_t}|_{L-J}\equiv 0,$ так что 
\begin{equation*}
    (M,\widehat{P}_t)\in Gf(L)
\end{equation*}
порождают кривую $\gamma_{M,P}$. При этом, на каждом слое $M_q$ все функции ${\widehat{P}_t}|_{M_q}$ совпадают вне (одного и того же) компакта $supp \mbox{ } \epsilon$. Теперь из гомотопического метода следует, что все $\widehat{P}_t$ эквивалентны друг другу. Проведем это рассуждение до конца.\\
3) Напомним, что все подмногообразия послойных критических точек совпадают 
\begin{equation*}
   C_t=C,\mbox{ где } C_t:=C_{M,\widehat{P}_t}\subset M,
\end{equation*}
причем критические значения $\widehat{P}_t$ равны в каждой точке $C$. Мы построим диффеотопию 
\begin{equation*}
    \psi_t: M\to M, \mbox{ } \psi_t(M_q)=M_q, 
\end{equation*}
\begin{equation*}
    \psi_t^*(\widehat{P}_t)=\widehat{P}_0=\widehat{P}.
\end{equation*}
Семейство $\psi_t$ определяется семейством векторных полей $X_t$, касающихся слоев $M_q$
\begin{equation*}
     \frac{\partial}{\partial t}\psi_t=X_t(\psi_t).
\end{equation*}
Условие $\psi_t^*(\widehat{P}_t)=\widehat{P}_0$ равносильно следующему тождеству на поля $X_t$
\begin{equation*}
    \mathcal{L}_{X_t}\widehat{P}_t=-\frac{\partial}{\partial t}\widehat{P}_t.
\end{equation*}
Мы можем положить $X_t$ равным нулю во всех критических точках $C$, а вне них приравнять
\begin{equation*}
    {X_t}|_{M_q}:=\frac{-\frac{\partial}{\partial t}\widehat{P}_t}{|d{\widehat{P}_t}|_{M_q}|}\nabla ({\widehat{P}_t}|_{M_q}).
\end{equation*}
\end{proof}
\end{prop}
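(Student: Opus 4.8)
The plan is to deduce this from the localization of the Chekanov operator, Proposition~\ref{prop:localization}; most of the real work is already done there, so here the argument is in two relatively soft steps. Write $(M,P):=Ch_S(E,F)$. First I would choose a base cutoff $\phi$ so that the modified family $(M,\widehat{P})$ still generates $\gamma_{M,P}$ while $\widehat{P}-P(f)$ is \emph{already} compactly supported, and then I would remove the remaining difference between $\widehat{P}$ and $P$ by a fibrewise homotopy (Moser) argument, producing the sought fibered diffeomorphism $\psi$ with $\psi^{*}P=\widehat{P}$ (from which $\psi^{*}P-P(f)=\widehat{P}-P(f)$ has compact support, as claimed).

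For the first step: since $supp\,\epsilon$ is compact, the subset $pr_{L}\circ S\circ\iota_{E,F}(C_{E,F}\cap supp\,\epsilon)\cup pr_{L}(supp\,\epsilon)$ of $L$ lies in some compact interval $I$, and as soon as a cutoff is identically $1$ on a neighbourhood $Op(I)$ the set $O$ attached in Proposition~\ref{prop:localization} to the data $(f,\epsilon,\phi)$ is contained in $I$, so that proposition applies. I would therefore fix $\phi\in\mathcal{C}^{\infty}(L)$ with $0\le\phi\le 1$, $\phi\equiv 1$ on $Op(I)$, and $\phi\equiv 0$ outside a slightly larger compact interval $J\supset Op(I)$, and let $\widehat{P}$ be the associated modification of $P(f)$. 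By Proposition~\ref{prop:localization}, $(M,\widehat{P})$ generates $\gamma_{M,P}$; and since $\phi$ and $\epsilon$ both have compact support, $\widehat{P}-P(f)=\phi(q)\,\epsilon(q+v,\xi)$ is compactly supported. So it is enough to find a fibered diffeomorphism $\psi$ with $\psi^{*}P=\widehat{P}$.

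For the second step I would interpolate: set $\phi_{t}:=(1-t)\phi+t$ for $t\in[0,1]$ and let $\widehat{P}_{t}$ be the corresponding modification of $P(f)$, so $\widehat{P}_{0}=\widehat{P}$ and $\widehat{P}_{1}=P$. Each $\phi_{t}$ is still identically $1$ on $Op(I)\supset O$, so Proposition~\ref{prop:localization} gives $\gamma_{M,\widehat{P}_{t}}=\gamma_{M,P}$ for every $t$, and repeating step~3 of its proof with $\phi$ replaced by $\phi_{t}$ shows that the fibrewise critical locus $C:=C_{M,\widehat{P}_{t}}$ and the critical values of $\widehat{P}_{t}$ along $C$ do not depend on $t$. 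The $t$-derivative of $\widehat{P}_{t}$ is the $t$-independent function $(1-\phi(q))\,\epsilon(q+v,\xi)$, which vanishes on $C$ — on $C$ either $q\in Op(O)$, so $1-\phi(q)=0$, or $q\notin Op(O)$, so $\epsilon(q+v,\xi)=0$, exactly as in step~4 of that proof — and on every fibre $M_{q}$ the functions ${\widehat{P}_{t}}|_{M_{q}}$ all agree outside the single compact set $supp\,\epsilon$. Now the standard homotopy method applies fibrewise: let $X_{t}$ be the fibre-tangent vector field vanishing on $C$ and equal off $C$ to $\dfrac{-\,\partial_{t}\widehat{P}_{t}}{\bigl|d({\widehat{P}_{t}}|_{M_{q}})\bigr|^{2}}\,\nabla({\widehat{P}_{t}}|_{M_{q}})$; its flow $\psi_{t}$ preserves each $M_{q}$, and since $\mathcal{L}_{X_{t}}\widehat{P}_{t}=-\partial_{t}\widehat{P}_{t}$ it satisfies $\psi_{t}^{*}\widehat{P}_{t}=\widehat{P}_{0}$, so $\psi:=\psi_{1}$ gives $\psi^{*}P=\psi^{*}\widehat{P}_{1}=\widehat{P}_{0}=\widehat{P}$.

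The point I expect to be the main obstacle is verifying that $X_{t}$ really is a smooth and complete fibre-tangent vector field: both $\partial_{t}\widehat{P}_{t}$ and $\bigl|d({\widehat{P}_{t}}|_{M_{q}})\bigr|^{2}$ vanish on $C$, so the defining formula is of the form $0/0$ near $C$. This is handled as in the usual Moser argument for generating families: the nondegeneracy built into $Gf(L)$ and carried through $Ch_{S}$ (cf. the proof of Proposition~\ref{prop:chekanov}) makes ${\widehat{P}_{t}}|_{M_{q}}$ fibrewise Morse--Bott with critical manifold $C\cap M_{q}$, and the vanishing of $\partial_{t}\widehat{P}_{t}$ on $C$ together with its fibrewise support lying in the fixed compact $supp\,\epsilon$ lets one check in Morse--Bott normal coordinates that $X_{t}$ extends smoothly by $0$ across $C$ and is compactly supported along each fibre, hence integrable for all $t\in[0,1]$. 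Everything else — the existence of $I$, the choice of $\phi$, the compact support of $\widehat{P}-P(f)$ — is routine, and with the vector field in hand the two steps assemble into the claimed $\psi$.
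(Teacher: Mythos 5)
Your proposal follows essentially the same two-step argument as the paper's own proof: the same compact interval $I$ and cutoff $\phi$ justified via Proposition~\ref{prop:localization}, the same interpolation $\phi_t=(1-t)\phi+t$, and the same homotopy-method vector field $X_t$ extended by zero over the fibrewise critical locus $C$. Your normalization $\bigl|d({\widehat{P}_t}|_{M_q})\bigr|^2$ in the denominator is in fact the one that makes $\mathcal{L}_{X_t}\widehat{P}_t=-\partial_t\widehat{P}_t$ hold (the paper writes the first power, which appears to be a slip), and your explicit discussion of why $X_t$ extends smoothly and is complete near $C$ addresses a point the paper passes over in silence.
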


\section{Производящие семейства на Арбореллевских графах}
\label{sec:main}
\subsection*{Точные кривые на перекрестке}
\begin{mydef}
\textbf{Ограниченное производящее семейство} на перекрестке $\boldsymbol{\perp}$ это
\begin{equation*}
   (E,F)\in Gf(O_q),\mbox{ для которого } \gamma_{E,F}\subset \Sigma( \boldsymbol{\perp}).  
\end{equation*}
Множество ограниченных производящих семейств на $\boldsymbol{\perp}$ обозначается за $Gf^c(\boldsymbol{\perp})$. 
\end{mydef}
Кривая $\gamma_{E,F}$, порожденная $(E,F)\in Gf(O_q)$, лежит в $\Sigma(\boldsymbol{\perp})$ тогда и только тогда, когда
\begin{equation*}
    \forall (q,\xi)\in \{F_{\xi}(q,\xi)=0\} \mbox{ выполнено}
\end{equation*}
\begin{equation*}
    \begin{cases}
      F_q(q,\xi)\ge -1,\\
      \begin{array}{|l@{}}
        |q|-1\le 1, \\
         F_q(q,\xi)\le 1. 
    \end{array}
    \end{cases}
\end{equation*}
Сейчас мы наконец определим оператор $\cdot |_{l}:Gf(\boldsymbol{\perp})\to Gf(l)$. Мы сделаем это в два шага: сначала с помощью оператора Чеканова для поворота $W$ определим $\circlearrowright$, а потом локализуем его, пользуясь тем, что все рассматриваемые нами кривые лежат в области $\Sigma( \boldsymbol{\perp})$. 
\begin{proof}[Доказательство предложения~\ref{prop:turn}]
Симплектоморфизм $W$ допускает производящую функцию 
\begin{equation*}
    H=V,\mbox{ } G\in\mathcal{C}^\infty(V),
\end{equation*}
\begin{equation*}
G(u,l):= -\langle u,u\rangle -\langle l,l\rangle. 
\end{equation*}
Действительно, для диффеоморфизма $b:V\to V$, $b(u,l):=(\frac{{l}+{u}}{2},\frac{l-u}{2})$ выполнено тождество 
\begin{equation*}
     \iota_{H,G}(b(u,{l}))=(b(u,{l}),dG(b(u,{l})))=(\frac{u+{l}}{2},\frac{{l}-{u}}{2}, -\widetilde{u}-\widetilde{l}, \widetilde{u}-\widetilde{l} )=  \varpi_\mathbb{V} (u,l, l,-u) = \varpi_\mathbb{V}\circ gr_{W_V}(u,l)\Rightarrow 
\end{equation*}
\begin{equation*}
    \Rightarrow  \iota_{H,G}\circ b= \varpi_\mathbb{V}\circ gr_{W} \Rightarrow \mathcal{L}_S=\iota_{H,G}(V). 
\end{equation*}
Рассмотрим точную кривую $\gamma\subset \mathbb{V}$, допускающую производящее семейство $(E,F)\in Gf(L)$. Применяя конструкцию $Ch$ к симплектоморфизму $W$ и $(E,F)\in Gf(L)$, получаем
\begin{equation*}
    M({E},W)=L_q\times W_{\xi}\times  L_v\times L_t,
\end{equation*}
\begin{equation*}
    P({F},W)(q,\xi,v,t)=F(u,\xi)-g(l,l)-g(t,t)-g(v,t)= F(q+v,\xi)-g(q+v/2,q+v/2)-g(t,t)-g(v,t)= 
\end{equation*}
\begin{equation*}
    = F(q+v,\xi)-g(q,q+v)-g(v/2+t,v/2+t). 
\end{equation*}
Из предложения~\ref{prop:chekanov} следует, что $\gamma_{M,P}= W(\gamma_{E,F})$, так что $\psi_*(E,F)\in Gf(O_p)$ порождает $\gamma$. 
\end{proof}
Выбрасывая из производящего семейства $C_W(E,F)$ лишнюю переменную $v/2+t$, получаем 
\begin{mydef}
\textbf{Оператор поворота} $\circlearrowright: Gf(L)\to Gf(L)$ определяется как
\begin{equation*}
   E^{\circlearrowright}:= L_q\times W_{\xi}\times  L_u, 
\end{equation*}
\begin{equation*}
    F^{\circlearrowright}(q,\xi,u):=F(u,\xi)-g(u,q).
\end{equation*}
\end{mydef}
Сейчас мы изменим определение, используя описанную выше процедуру локализации.\\
Зафиксируем сглаживающую функцию $\phi\in\mathcal{C}^\infty(L)$, заданную следующими условиями
\begin{equation*}
     0\le \phi(q)\le 1,
\end{equation*}
\begin{equation*}
    \phi|_{|q|>2}\equiv 0,\mbox{ } \phi|_{|q|<5/4}\equiv 1. 
\end{equation*}
Рассмотрим производящее семейство $(E,F)\in Gf^c(\boldsymbol{\perp})$. Разобьем функцию $F$ в сумму
\begin{equation*}
     F(q,\xi)=F_c(q,\xi)+F_{nc}(q,\xi):=\phi(q) F(q,\xi) + (1-\phi(q))F(q,\xi). 
\end{equation*}
Определим производящее семейство $(E^\dag, F^\dag)\in Gf(L)$ следующим образом 
\begin{equation*}
    E^\dag:= E^{\circlearrowright}, F^\dag:= \widehat{F^{\circlearrowright}},
\end{equation*}
где в формуле для $\widehat{F^{\circlearrowright}}$ мы используем наше зафиксированное $\phi$. Ее можно записать явно как 
\begin{equation*}
    F^\dag(q,\xi,u):= F_c(u,\xi)+\phi(q)F_{nc}(u,\xi)-g(u,q)= \phi(u) F(u,\xi)+(1-\phi(u))\phi(u)F_{nc}(u,\xi)-g(u,q).
\end{equation*}
\begin{prop}
Для произвольного $(E,F)\in Gf^c(\boldsymbol{\perp})$ кривые $\gamma_{E^\dag, F^\dag}$ и $W(\gamma_{E,F})$ совпадают.
\begin{proof}
Достаточно доказать, что $ \gamma_{E^\dag, F^\dag}= \gamma_{E^\circlearrowright,F^\circlearrowright}.$\\
Воспользуемся предложением~\ref{prop:localization} для $f=F_c$, $\epsilon=F_{nc}$ и заданного $\phi$. По определению,
\begin{equation*}
    O=pr_{L^*}\circ\iota_{E,F_c}(C_{E,F_c}\cap supp\mbox{ } F_{nc})\cup \{r\in L|\mbox{ } \exists (q,\xi)\in C_{E,F_{c,r}}\cap supp\mbox{ } F_{nc}\mbox{ : } pr_{L^*}\circ\iota_{E,F_{c,r}}(q,\xi)=r\}. 
\end{equation*}
Так как $(E,F)\in Gf^c(\boldsymbol{\perp})$ подмножество $\gamma_{E,F_c}$ и все  $\gamma_{E,F_{c,r}}\subset V$ лежат в $\Sigma(\boldsymbol{\perp})$, то
\begin{equation*}
    q\in supp\mbox{ } F_{nc}\Rightarrow |q|>1 \Rightarrow |pr_{L^*}\circ\iota_{E,F_c}|<1 \Rightarrow O\subset [-1,1].
\end{equation*}
Следовательно, $\phi\equiv 1$ на окрестности $O$, и утверждение применимо.
\end{proof}
\end{prop}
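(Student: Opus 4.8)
The plan is to derive the equality from a single application of Proposition~\ref{prop:localization}, using Proposition~\ref{prop:turn} to handle the rotation itself. I would first recall that the proof of Proposition~\ref{prop:turn} already exhibits $(E^{\circlearrowright},F^{\circlearrowright})$ as a generating family over $O_q$ with $\gamma_{E^{\circlearrowright},F^{\circlearrowright}}=W(\gamma_{E,F})$ --- the discarded coordinate $v/2+t$ being a nondegenerate quadratic summand of $P(F,W)$ --- so it suffices to show $\gamma_{E^{\dag},F^{\dag}}=\gamma_{E^{\circlearrowright},F^{\circlearrowright}}$. Since $E^{\dag}=E^{\circlearrowright}$ and, by construction, $F^{\dag}=\widehat{F^{\circlearrowright}}$ is precisely the function obtained from $F^{\circlearrowright}$ by the cutoff recipe of Proposition~\ref{prop:localization} applied to the splitting $F=F_c+F_{nc}$ (i.e. with $f=F_c$, $\epsilon=F_{nc}$ and the chosen bump function $\phi$), the equality will follow as soon as I verify the hypothesis of Proposition~\ref{prop:localization}: that $\phi\equiv 1$ on an open neighbourhood of the associated set $O\subset L$.

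The core of the argument is then to pin down $O$. Here $S$ is the rotation $W(q,p)=(p,-q)$, so $pr_L\circ W$ is the cotangent projection $pr_{L^{*}}$ and the auxiliary families $F_r$ become $F_{c,r}=F_c+\phi(r)F_{nc}$; thus $O$ is the union of $pr_{L^{*}}\iota_{E,F_c}\bigl(C_{E,F_c}\cap\mathrm{supp}\,F_{nc}\bigr)$ with the set of $r\in L$ realised as $pr_{L^{*}}\iota_{E,F_{c,r}}(q,\xi)$ for some $(q,\xi)\in C_{E,F_{c,r}}\cap\mathrm{supp}\,F_{nc}$. Because $F_{nc}=(1-\phi)F$ and $1-\phi$ vanishes on $\{|q|<5/4\}$, every point of $\mathrm{supp}\,F_{nc}$ lies over $\{|q|\ge 5/4\}\subset\{|q|\ge 1\}$, and over that region the defining inequalities of the crossing force the cotangent coordinate of any curve contained in $\Sigma(\boldsymbol{\perp})$ to lie in $[-1,1]$. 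Granting that the cutoff curves $\gamma_{E,F_c}$ and $\gamma_{E,F_{c,r}}$ remain in $\Sigma(\boldsymbol{\perp})$ --- which is where the hypothesis $(E,F)\in Gf^{c}(\boldsymbol{\perp})$ enters --- I conclude $O\subset[-1,1]\subset\{|q|<5/4\}$, so $\phi\equiv 1$ on $O$ and in fact on the open neighbourhood $\{|q|<5/4\}$; Proposition~\ref{prop:localization} then gives $\gamma_{E^{\dag},F^{\dag}}=\gamma_{E^{\circlearrowright},F^{\circlearrowright}}=W(\gamma_{E,F})$.

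The step I expect to be the main obstacle is the containment of the cutoff curves inside $\Sigma(\boldsymbol{\perp})$: replacing $F(q,\xi)$ by $F_c(q,\xi)=\phi(q)F(q,\xi)$ changes the fibrewise-critical locus and turns the slope $F_q$ into $\phi'(q)F+\phi(q)F_q$, so one cannot simply invoke $\gamma_{E,F}\subset\Sigma(\boldsymbol{\perp})$. I would split the base into the region $\{|q|\le 1\}$, where $\phi\equiv 1$ and $F_c=F$ so nothing changes; the region $\{|q|\ge 2\}$, where $\phi\equiv 0$ and $\phi'\equiv 0$ (a minimum of a non-negative function), so $F_c\equiv 0$ and the curve sits on the zero section, which lies in $\Sigma(\boldsymbol{\perp})$; and the transition band $\{5/4\le|q|\le 2\}$, where a fibrewise-critical point of $F_c$ with $\phi(q)>0$ is automatically fibrewise-critical for $F$ (hence $|F_q|\le 1$), and the remaining term $\phi'(q)F$ must be controlled --- the natural route being a sufficiently gentle choice of $\phi$ together with the crossing bound along $\gamma_{E,F}$, and the same analysis applied to $F_{c,r}$. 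Once this containment is secured, the rest is the bookkeeping above combined with Propositions~\ref{prop:turn} and~\ref{prop:localization}.
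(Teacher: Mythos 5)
Your proposal follows essentially the same route as the paper's own proof: reduce the claim to $\gamma_{E^\dag,F^\dag}=\gamma_{E^{\circlearrowright},F^{\circlearrowright}}$ via Proposition~\ref{prop:turn}, apply Proposition~\ref{prop:localization} with $f=F_c$, $\epsilon=F_{nc}$ and the fixed cutoff $\phi$, and conclude by showing $O\subset[-1,1]$ from the fact that $supp\,F_{nc}$ sits over $\{|q|\ge 5/4\}$, where the defining inequalities of $\Sigma(\boldsymbol{\perp})$ bound the cotangent coordinate by $1$. The one place you go beyond the paper --- asking why the cutoff curves $\gamma_{E,F_c}$ and $\gamma_{E,F_{c,r}}$ themselves remain in $\Sigma(\boldsymbol{\perp})$ --- is exactly the step the paper asserts without argument from $(E,F)\in Gf^c(\boldsymbol{\perp})$, and your instinct that this is the weak link is sound: on the transition band the slope picks up the term $\phi'(q)(1-\phi(r))F(q,\xi)$, the value $F(q,\xi)$ is not a priori bounded there, and a ``sufficiently gentle'' $\phi$ is not actually available, since $supp\,\phi'$ is pinned inside $\{5/4\le|q|\le 2\}$ by the definition of $\dag$. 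So that step would need a genuinely different mechanism (for instance a preliminary normalization of $F$ over the band, or an argument using only the first component of $O$ and the case $\phi(r)=1$) in your write-up, just as it silently does in the paper's.
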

\begin{mydef}
\textbf{Оператор ограничения} $\cdot |_{l}$ производящего семейства с $\boldsymbol{\perp}$ на $l$ задается как  
\begin{equation*}
    \cdot |_{l}:Gf^c(\boldsymbol{\perp})\to Gf(O_{p>0})
\end{equation*}
\begin{equation*}
    (E,F)|_{l}:=(\psi_*(E^\dag, F^\dag))|_{O_{p>0}}
\end{equation*}
\end{mydef}
В дальнейшем, нам понадобится обратить $\dag$. На первый взгляд, это невозможно, так как оператор Чеканова увеличивает ранг расслоения. Но мы изучаем семейства с точностью до стабильной эквивалентности, так что нам достаточно стабильной обратимости. Рассмотрим оператор $-\dag$, построенный по описанной выше схеме но для симплектоморфизма $W^{-1}$.\\
Заметим, что производящая функция $-G$ порождает симплектоморфизм $W^{-1}$, так что 
\begin{equation*}
    F^{\circlearrowleft}(q,\xi,u):=F(u,\xi)+g(u,q).
\end{equation*}
\begin{prop}
Для любого семейства $(E,F)\in Gf^c(\boldsymbol{\perp})$ существует эквивалентность
\begin{equation*}
    ((E,F)^\dag)^{-\dag}\cong(E,F)\oplus (L_v\oplus L_l,g(v,v)+g(l,l)).
\end{equation*}
\begin{proof}
Достаточно проверить это тождество для операторов $\circlearrowright$ и $\circlearrowleft$. В этом случае
\begin{equation*}
    (F^\circlearrowright)^{\circlearrowleft}(q,\xi,u,v)=F(u,\xi)-vu+vq=F(u,\xi)+v(q-u).
\end{equation*}
Заменяя $u$ на $u+q$ получаем обычную формулу для $P(F,Id_V)$.
\end{proof}
\end{prop}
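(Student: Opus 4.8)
The plan is to imitate the two-step structure already used for the restriction operator: first reduce the statement, which literally involves the localized operators $\dag$ and $-\dag$, to the corresponding statement for the bare rotation operators $\circlearrowright$ and $\circlearrowleft$; and then compute the composite $\circlearrowleft\circ\circlearrowright$ by hand and recognize it as the Chekanov operator $Ch_{Id_V}$, whose effect on a generating family was already worked out in Section~\ref{sec:chekanov}.

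For the reduction, recall that $(E^\dag,F^\dag)$ is obtained from $(E^{\circlearrowright},F^{\circlearrowright})$ by the hat-localization with the fixed cutoff $\phi$, and $(E^{-\dag},F^{-\dag})$ from $(E^{\circlearrowleft},F^{\circlearrowleft})$ by the analogous localization for $W^{-1}$. Since we start from $(E,F)\in Gf^c(\boldsymbol{\perp})$, all auxiliary curves entering the set $O$ of Proposition~\ref{prop:localization} are contained in $\Sigma(\boldsymbol{\perp})$, so $\phi\equiv1$ on a neighbourhood of $O$ at each stage; by Proposition~\ref{prop:localization} and the surgery construction that follows it in Section~\ref{sec:chekanov}, the localization does not change the generated curve and, near the fibrewise critical locus, not even the family. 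Since generating families are considered up to stable equivalence, this lets me replace $\dag$ by $\circlearrowright$ and $-\dag$ by $\circlearrowleft$. I expect this step to be the main obstacle: the localization genuinely modifies $F$ away from a compact set (because $\Sigma(\boldsymbol{\perp})$ is non-compact), so one only controls the two families near their critical sets, and some care is needed to see that this is enough once the operators are composed.

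Next I would compute $\circlearrowleft\circ\circlearrowright$ directly. By definition $E^{\circlearrowright}=L_q\times W_\xi\times L_u$ with $F^{\circlearrowright}(q,\xi,u)=F(u,\xi)-g(u,q)$; applying $\circlearrowleft$ to this family (base variable $q$, fibre variables $(\xi,u)$) adjoins one further copy $L_v$ of $L$ and gives
\begin{equation*}
(F^{\circlearrowright})^{\circlearrowleft}(q,\xi,u,v)=F^{\circlearrowright}(v,\xi,u)+g(v,q)=F(u,\xi)-g(u,v)+g(v,q)=F(u,\xi)+v(q-u).
\end{equation*}
The fibrewise substitution $u\mapsto u+q$ is a fibered diffeomorphism of $L_q\times W_\xi\times L_u\times L_v$ and turns this function into $F(q+u,\xi)-g(u,v)$, which is exactly $P(F;Id_V)$, the Chekanov family of $(E,F)$ for the identity.

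It then remains to quote the $Ch_{Id_V}$ example from Section~\ref{sec:chekanov}: the explicit fibered diffeomorphism $\psi(q,\xi,u,v)=(q,\xi,u,v+(F(q+u,\xi)-F(q,\xi))/u)$ satisfies $\psi^{*}P(F;Id_V)=F(q,\xi)-g(u,v)$, so $Ch_{Id_V}(E,F)\cong(E,F)\oplus(L_u\oplus L_v,-g(u,v))$. Diagonalizing the rank-two nondegenerate form on the split-off fibre, and absorbing one auxiliary nondegenerate form in the sense of stable equivalence, identifies the summand with $(L_v\oplus L_l,g(v,v)+g(l,l))$ and yields the asserted equivalence $((E,F)^\dag)^{-\dag}\cong(E,F)\oplus(L_v\oplus L_l,g(v,v)+g(l,l))$. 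Everything past the reduction of the second paragraph is routine bookkeeping: the substitution $u\mapsto u+q$ and a citation of the $Ch_{Id_V}$ computation.
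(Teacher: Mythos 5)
Your proposal is correct and follows essentially the same route as the paper: reduce from $\dag$, $-\dag$ to the bare operators $\circlearrowright$, $\circlearrowleft$, compute $(F^{\circlearrowright})^{\circlearrowleft}(q,\xi,u,v)=F(u,\xi)+v(q-u)$, substitute $u\mapsto u+q$ to recognize $P(F;Id_V)$, and invoke the $Ch_{Id_V}$ example. You merely spell out two steps the paper leaves implicit --- the justification (via Proposition~\ref{prop:localization}) that localization may be dropped, and the final bookkeeping identifying the split-off nondegenerate rank-two form --- which is a welcome amplification rather than a different argument.
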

\subsection*{Производящие семейства на Арбореллевских графах}
Зафиксируем Арбореллевский граф $\mathcal{T}$, Лиувиллеву поверхность $\mathbb{S}(\mathcal{T})$ и ее подобласть $\Sigma(\mathcal{T})$.\\
Напомним, что $\Sigma(\mathcal{T})$ покрывается стандартными картами $W_{v}$, пронумерованными $\mathcal{V}(\Gamma)$.\\
Рассмотрим набор $(E_v,F_v)\in Gf(I_v)$ производящих семейств, такой, что для каждого ребра $e$, соединяющего вершины $v$ и $w$, зафиксирована стабильная эквивалентность ограничений 
\begin{equation*}
    (E_v,F_v)|_{J_e}\cong^{st} (E_w,F_w)|_{J_e}.
\end{equation*}
Заметим, что кривые $\gamma_{E_v,F_v}$ совпадают на пересечениях карт и склеиваются в кривую $\gamma_{\mathcal{E},\mathcal{F}}$. В таком случае говорят, что $(\mathcal{E},\mathcal{F})$ порождает кривую $\gamma_{\mathcal{E},\mathcal{F}}$. 
\begin{mydef}
\textbf{Ограниченное производящее семейство} на Арбореллевском графе $\mathcal{T}$ это 
\begin{equation*}
    (\mathcal{E},\mathcal{F})\in \sqcup_{v\in\mathcal{V}(\Gamma)} Gf(I_v)
\end{equation*}
которое порождает кривую $\gamma_{\mathcal{E},\mathcal{F}}$, лежащую в  $\Sigma(\mathcal{T})$. Они образуют множество $Gf^c(\mathcal{T})$.
\end{mydef}
Рассмотрим подгруппу $Ham(\Sigma(\mathcal{T}))\subset Ham^c(\mathbb{S}(\mathcal{T}))$ гамильтоновых симплектоморфизмов, носитель которых лежит в $\Sigma(\mathcal{T})$ и не пересекает $\mathbb{D}_v-R_e$, если $e$ это единственное ребро $v$.\\
Для доказательства теоремы~\ref{theorem:main} нам понадобится следующая версия Леммы о фрагментации.
\begin{lem}[\cite{Ban}]
Любой элемент $\psi\in Ham(\Sigma(\mathcal{T}))$ можно разложить в композицию 
\begin{equation*}
   \psi=\psi_n\circ\cdots\circ\phi_1, \mbox{ }\psi_i\in Ham(\Sigma(\mathcal{T}))
\end{equation*}
так, что $\psi_i$ $C^1$-близко к $id$ и имеет компактный носитель, лежащий в карте $W_{v_i}$.
\end{lem}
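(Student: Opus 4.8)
The plan is to run Banyaga's fragmentation argument \cite{Ban}, checking at each step that the factors stay inside the subgroup $Ham(\Sigma(\mathcal{T}))$. Fix a Hamiltonian isotopy $\{\psi_t\}_{t\in[0,1]}$ with $\psi_0=\mathrm{id}$, $\psi_1=\psi$, generated by a time-dependent Hamiltonian $H_t$; since $\psi\in Ham(\Sigma(\mathcal{T}))$ we may take $H_t$ supported, uniformly in $t$, in a fixed compact set $K$ with $K\subset\Sigma(\mathcal{T})$ and $K$ disjoint from every region $\mathbb{D}_v-R_e$ attached to a leaf vertex $v$; then $\mathrm{supp}(\psi_t)\subset K$ for all $t$. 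Because $\Gamma$ is finite, $K$ is covered by finitely many charts $W_{v_1},\dots,W_{v_m}$, and one may choose compact sets $L_j$ with $L_j\subset\mathrm{int}(W_{v_j})$, $K\subset\bigcup_j\mathrm{int}(L_j)$, each $L_j$ again disjoint from the leaf regions, and with a definite gap $\rho_j>0$ between $L_j$ and $\partial W_{v_j}$.

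First I would reduce to factors that are $C^1$-close to the identity. Subdivide $[0,1]$ into $N$ equal intervals and set $g_k:=\psi_{k/N}\circ\psi_{(k-1)/N}^{-1}$, so that $\psi=g_N\circ\cdots\circ g_1$. A short computation identifies $g_k$ with the time-one map of the Hamiltonian isotopy generated by $s\mapsto\frac{1}{N}H_{(k-1)/N+s/N}$, whose $C^1$-norm is $O(1/N)$; moreover $\mathrm{supp}(g_k)\subset\mathrm{supp}(\psi_{k/N})\cup\mathrm{supp}(\psi_{(k-1)/N})\subset K$, so $g_k\in Ham(\Sigma(\mathcal{T}))$. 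Hence, after fixing $N$ large (depending on $\sup_t\|H_t\|_{C^1}$, on $m$, and on $\min_j\rho_j$), it suffices to prove the lemma for a single $g\in Ham(\Sigma(\mathcal{T}))$ supported in $K$ whose generating Hamiltonian has arbitrarily small $C^1$-norm $\varepsilon$.

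Then I would fragment such a $g$ by induction over the charts. Put $h_0:=\mathrm{id}$ and suppose we have built $\psi_1,\dots,\psi_{j-1}\in Ham(\Sigma(\mathcal{T}))$ with $\mathrm{supp}(\psi_i)\subset W_{v_i}$, each $C^1$-close to $\mathrm{id}$, such that $h_{j-1}:=\psi_{j-1}\circ\cdots\circ\psi_1$ agrees with $g$ on $L_1\cup\cdots\cup L_{j-1}$. As $g$ and the $\psi_i$ are $C^1$-small, $h_{j-1}$ displaces points by an amount that tends to $0$ with $\varepsilon$, so for $N$ large $h_{j-1}(L_j)$ lies in a compact subset of $\mathrm{int}(W_{v_j})$. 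The symplectomorphism $g\circ h_{j-1}^{-1}$ is $C^1$-close to $\mathrm{id}$ near $h_{j-1}(L_j)$; representing it there, in the Darboux chart on $U_{v_j}$, by a generating function differing from that of $\mathrm{id}$ by a $C^2$-small term and truncating that term outside $W_{v_j}$, we obtain $\psi_j\in Ham(\Sigma(\mathcal{T}))$ that agrees with $g\circ h_{j-1}^{-1}$ on a neighbourhood of $h_{j-1}(L_j)$, is supported in $W_{v_j}$ and disjoint from the leaf regions, and is $C^1$-close to $\mathrm{id}$. Then $h_j:=\psi_j\circ h_{j-1}$ agrees with $g$ on $L_1\cup\cdots\cup L_j$; after $m$ steps $h_m=g$ on $\bigcup_jL_j\supset K\supset\mathrm{supp}(g)$, and since $h_m$ and $g$ both equal $\mathrm{id}$ off $K$ we get $g=\psi_m\circ\cdots\circ\psi_1$. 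Carrying this out for every $g_k$ and concatenating exhibits $\psi$ as a composition of $mN$ factors of the required form.

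The main obstacle is the support bookkeeping in the inductive step: one must verify that $g\circ h_{j-1}^{-1}$ can genuinely be replaced, via truncation of its generating function in the Darboux chart, by a Hamiltonian symplectomorphism supported in $W_{v_j}$ that still agrees with it near $h_{j-1}(L_j)$, and that neither these truncations nor the small displacements accumulated along the way ever push support out of $\Sigma(\mathcal{T})$ or into a leaf region $\mathbb{D}_v-R_e$. This is exactly where the $C^1$-smallness produced by the time subdivision is needed, together with the room $\rho_j$ left between $L_j$ and $\partial W_{v_j}$; everything else is the classical fragmentation machinery, whence the attribution to \cite{Ban}.
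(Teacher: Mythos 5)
The paper does not prove this lemma at all: it is imported verbatim from Banyaga \cite{Ban} and used as a black box, so there is no internal proof to compare yours against. On its own terms, your two-stage plan (subdivide time to make the factors small, then fragment each small factor over the finite cover $\{W_{v_j}\}$) has the right architecture, and the first stage is fine --- with the minor caveat that $C^1$-closeness of $g_k$ to $\mathrm{id}$ is controlled by the $C^2$-norm of $\tfrac1N H$, not its $C^1$-norm; harmless here since $H$ is smooth with compact support. Your second stage, however, departs from Banyaga's actual mechanism. He fragments at the level of Hamiltonians: for a partition of unity $\rho_j$ subordinate to the cover, let $G^j_t$ be the flow of $(\sum_{i\le j}\rho_i)H_t$ and write $g=\prod_j G^j_1\circ (G^{j-1}_1)^{-1}$; each factor is generated by $(\rho_j H_t)\circ (G^{j-1}_t)^{-1}$, whose support lies in $G^{j-1}_t(\mathrm{supp}\,\rho_j)\subset W_{v_j}$ once the isotopy is $C^0$-small. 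The factors come out directly, with no inductive correction and no truncation.

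Your inductive correction scheme has one genuine soft spot. For the induction to close you need $h_j=\psi_j\circ h_{j-1}$ to still agree with $g$ on the earlier sets $L_i$, $i<j$, i.e.\ you need the truncated $\psi_j$ to be exactly the identity on $h_{j-1}(L_i)$. On those sets $g\circ h_{j-1}^{-1}=\mathrm{id}$, so the perturbation term $\sigma=S-qP$ of its generating function is locally constant there --- but not necessarily zero on every such component, only on the one connected to the complement of the support. Multiplying $\sigma$ by a cutoff $\chi$ produces $d(\chi\sigma)=\sigma\,d\chi\neq 0$ on any component where the constant is nonzero, i.e.\ the truncation can create new motion precisely on the overlaps $L_i\cap W_{v_j}$, destroying the agreement already achieved. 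You must either argue these components connect to the unbounded region (forcing $\sigma=0$ there), or abandon the correction scheme for the partition-of-unity decomposition, which avoids the problem entirely. Your support bookkeeping for staying inside $\Sigma(\mathcal{T})$ and off the leaf regions $\mathbb{D}_v-R_e$ is otherwise handled correctly by the choice of the $L_j$ and the smallness of the displacements.
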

\begin{proof}[Доказательство теоремы~\ref{theorem:main}]
Зафиксируем кривую $\gamma$, допускающую производящее семейство. Мы будем доказывать утверждение теоремы по индукции по длине кратчайшего разложения 
\begin{equation*}
     \psi=\psi_n\circ\cdots\circ\psi_1
\end{equation*}
Заметим, что шаг и база индукции совпадают с частным случаем утверждения для 
\begin{equation*}
   \psi\in Ham(W_v), \mbox{ } C^1 \mbox{-близкого к } Id.
\end{equation*}  
Рассмотрим образ кривой $\delta:=\psi(\gamma)$. Мы хотим построить семейства $(H_w,G_w)$, порождающие
\begin{equation*}
    \delta_w\subset W_w
\end{equation*} 
так, чтобы они были стабильно эквивалентны друг другу на пересечениях. Рассмотрим форму 
\begin{equation*}
    q:V_{x,y}\to\mathbb{R}, \mbox{ } q(x,y):=g(x,x)+g(y,y). 
\end{equation*}
Следующая лемма сразу следует из свойства гамильтонова подъема для семейств над $L$ в той форме, в которой оно доказано в статье \cite{EG04}.
\begin{lem}
Рассмотрим Гамильтонову изотопию $\psi\in Ham(\Sigma(\boldsymbol{\perp}))\mbox{ } C^1 \mbox{-близкую к } Id.$ Для $(E,F)\in Gf(\boldsymbol{\perp})$ существует функция $h$ на $H:=E\times V_{x,y}$ с компактным носителем, что 
\begin{equation*}
    \phi(\gamma_{E,F})=\gamma_{H,G}, \mbox{ где }  G(q,\xi,v,t):=F(q,\xi)+q(x,y)+h(q,\xi,v,t). 
\end{equation*}
\end{lem}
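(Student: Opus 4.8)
The plan is to reduce the statement to the standard generating-family lifting theorem for cotangent bundles of intervals, which is exactly \cite{EG04}. Recall that $\Sigma(\boldsymbol{\perp})$ sits inside $\mathbb{V}=(\mathbb{R}^2_{q,p},dp\wedge dq)$, and the latter is symplectomorphic to $\mathbb{T}^*(O_q)$, the cotangent bundle of the line $L$. A curve $\gamma_{E,F}$ with $(E,F)\in Gf(\boldsymbol{\perp})$ is by definition a curve in $\mathbb{T}^*L$ admitting a generating family over $L$, so the hypothesis puts us precisely in the setting of \cite{EG04}: we have a Hamiltonian isotopy $\psi$ of $\mathbb{T}^*L$ (the restriction of the ambient isotopy to the crossing chart, which makes sense because $\psi$ is $C^1$-close to $Id$ and compactly supported inside $W_v\cong\Sigma(\boldsymbol{\perp})$) and a curve admitting a generating family $(E,F)$.

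First I would invoke the cotangent-bundle version of the Hamiltonian lifting property, in the precise form from \cite{EG04}, which yields a generating family for $\psi(\gamma_{E,F})$ obtained from $(E,F)$ by stabilizing with a nondegenerate quadratic form and then perturbing the total function by a compactly supported term. Concretely, one enlarges $E$ to $H=E\times V_{x,y}$, adds the quadratic form $q(x,y)=g(x,x)+g(y,y)$, and then the lifting theorem produces a compactly supported correction $h$ so that $G(q,\xi,v,t):=F(q,\xi)+q(x,y)+h(q,\xi,v,t)$ is fiberwise-transverse and $\gamma_{H,G}=\psi(\gamma_{E,F})$. The reason the correction $h$ is compactly supported is that $\psi$ itself has compact support, so outside a compact set $\psi(\gamma_{E,F})$ agrees with $\gamma_{E,F}$, and the standard Chekanov-type argument for building the family over $L$ (the graph-in-$\mathbb{T}^*V$ picture used to define the operator $Ch$ earlier in the paper) produces a family that differs from the stabilization of $(E,F)$ only over that compact set. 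This is exactly the content one extracts from \cite{EG04}, so at this point the lemma is just a faithful restatement of that source adapted to our notation.

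The one genuine thing to check is the translation of hypotheses: the paper's $Ham(\Sigma(\boldsymbol{\perp}))$ consists of Hamiltonian diffeomorphisms supported inside the crossing and compatible with the boundary conditions there, and one must confirm that restricting such a $\psi$ to the open chart $U_v\cong\mathbb{T}^*L$ gives a genuine compactly supported Hamiltonian diffeomorphism of $\mathbb{T}^*L$ to which \cite{EG04} applies. Since $\psi$ is $C^1$-close to the identity and supported in the interior, this is immediate, and the identification $U_v\cong\mathbb{T}^*(I_v)$ fixed in Section~\ref{sec:curves} carries the generating family $(E,F)\in Gf(\boldsymbol{\perp})\subset Gf(O_q)$ to a generating family over $L$ in the sense of Definition~\ref{def:gf}. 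The main (and only) obstacle is really bookkeeping: making sure that the quadratic stabilization one is forced to add is exactly the $q(x,y)=g(x,x)+g(y,y)$ appearing in the statement (rather than some other nondegenerate form), which matters because this precise shape is what will match the output of the rotation operator $\dag$ in the subsequent gluing argument; this follows by inspecting the construction in \cite{EG04}, where the stabilizing variables come in conjugate pairs $(x,y)$ carrying the standard positive-definite form. No further estimates are needed, so the proof of the lemma is essentially a citation together with this identification of data.
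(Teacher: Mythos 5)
Your proposal matches the paper's treatment: the paper offers no separate proof of this lemma, stating only that it follows immediately from the Hamiltonian lifting property for generating families over $L$ in the precise form proved in \cite{EG04}, which is exactly the reduction you carry out. Your additional bookkeeping --- checking that $\psi$ restricts to a compactly supported Hamiltonian diffeomorphism of $\mathbb{T}^*L$, that compact support of $\psi$ forces compact support of $h$, and that the stabilizing form is the standard $q(x,y)=g(x,x)+g(y,y)$ --- is a faithful (and slightly more explicit) elaboration of the same citation, not a different route.
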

Используя Лемму, заменим семейство $(E_v,F_v)$ на новое семейство, порождающее $\delta_v$
\begin{equation*}
    (H_v:=E_v\times V,G_v:=F_v+q+h).
\end{equation*}
Пусть $d(v)=3$. Напомним, что для из компактности носителя функции $h\in\mathcal{C}^\infty(E_v)$ следует существование расслоенного диффеоморфизма $\psi:M\to M,$ такого, что носитель разности $\delta:=\psi^*(F+h)^\dag-F^\dag$ компактен. Подкрутим тривиализацию ${H_v}|_{I_l(v)}$ на диффеоморфизм $\psi$.\\
Теперь построим $(H_w,G_w)$ для остальных вершин $w$. Если $w$ и $v$ не соединены ребром, $U_w$ не пересекается с носителем $\psi$ и кривые $\delta_{w}$ и $\gamma_{w}$ совпадают. В этом случае, положим 
\begin{equation*}
    (H_w:=E_w\times V, \mbox{ } G_w:=F_w+q).
\end{equation*}
Осталось разобраться с вершинами, соседними с $v$.  Для каждой такой вершины $w$ пересечение $W_w$ с носителем $\psi$ содержится в ленте, отвечающей их общему ребру $e$. На $J_e$ уже задано производящее семейство для кривой $\delta$. Так как $\dag$ переводит стабильно эквивалентные семейства в стабильно эквивалентные,  это семейство отличается от $F_w|_{J_e}+h$ на функцию $\epsilon_w$, носитель которой лежит вне $\mathbb{D}_w$. Мы воспользуемся следующей  Леммой
\begin{lem}
Рассмотрим перекресток $\boldsymbol{\perp}$, одно из его ребер $e$ и пару производящих семейств
\begin{equation*}
    (E,F)\in Gf^c(\boldsymbol{\perp}),\mbox{ } (H,G)\in Gf(I_e).
\end{equation*} 
Пусть стабилизация ограничения $(E,F)|_{I_e}\oplus(V,q)$ совпадает с семейством $(H,G)$ на 
\begin{equation*}
     J_e\cap K, \mbox{ } v\in K
\end{equation*}
пересечении $J_e$ и некоторого подинтервала $K$ ребра $e$, содержащего $v$. Тогда существует
\begin{equation*}
    (\widetilde{E},\widetilde{F})\in Gf^c(\boldsymbol{\perp}),
\end{equation*}
ограничение которого на $e$ удовлетворяет $(\widetilde{E},\widetilde{F})|_{J_e}\cong^{st} (H,G)|_{J_e},$ а на другие ребра
\begin{equation*}
   (\widetilde{E},\widetilde{F})|_{J_{e'}}\cong (E,F)|_{J_{e'}}\oplus (V,q).
\end{equation*}
\end{lem}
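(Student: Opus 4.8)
The plan is to argue separately according to whether $e$ is the distinguished half-edge $l(v)$ or one of the two other (\emph{brim}) half-edges of $\boldsymbol{\perp}$, since for brim edges every restriction operator involved is an honest restriction of the defining function, whereas for $e=l(v)$ the restriction $\cdot|_e$ is the operator $\dag$. In either case the common idea is the same: use the hypothesis that $(H,G)$ and $(E,F)|_{I_e}\oplus(V,q)$ coincide on the neighbourhood $J_e\cap K$ of $v$ to glue $(H,G)$ into the crossing along $e$ near $v$, producing $(\widetilde E,\widetilde F)$; then control the effect on the remaining restrictions via the localization property of $\dag$ (Proposition~\ref{prop:localization}) and the stable invertibility $((E,F)^\dag)^{-\dag}\cong(E,F)\oplus(V,q)$ of the previous section.

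Consider first the brim case. I would replace $(E,F)$ by $(E,F)\oplus(V,q)$, which is still in $Gf^c(\boldsymbol{\perp})$ and whose restriction to $I_e$ now agrees with $(H,G)$ on $J_e\cap K$ outright. Fix a cutoff $\rho\in\mathcal C^\infty(I_v)$ equal to $1$ on a smaller neighbourhood of $v$ and supported in $K$, and set $\widetilde E:=E\times V$ with $\widetilde F$ equal to $F\oplus q$ on $I_v$ minus the far part of $I_e$, equal to $G$ on the far part of $I_e$, glued on the overlap $J_e\cap K$ where the two agree. This is again a generating family (fibrewise transversality is a piecewise statement, the pieces agreeing on an open overlap), and $\gamma_{\widetilde E,\widetilde F}\subset\Sigma(\boldsymbol{\perp})$ because it coincides with $\gamma_{E,F}$ off $e$ and near $v$, and with $\gamma_{H,G}$ on the rest of $e$. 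Its restriction to $J_e$ is then $(H,G)|_{J_e}$ and its restriction to the other brim $e'$ is $(E,F)|_{J_{e'}}\oplus(V,q)$. For the restriction to the stalk one notes that $\widetilde F$ differs from $F\oplus q$ only by a function supported where $|q|\ge1$, so by boundedness the corresponding change of $W(\gamma_{E,F})$ sits in the region where, exactly as in the proof that $\gamma_{E^\dag,F^\dag}=W(\gamma_{E,F})$, Proposition~\ref{prop:localization} applies (the cutoff $\phi$ of $\dag$ being $\equiv1$ on the relevant set $O$), whence $(\widetilde E,\widetilde F)|_{l}\cong(E,F)|_{l}\oplus(V,q)$.

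Now the stalk case, where $\cdot|_e=\cdot|_l=\dag$ and the brim restrictions are honest. I would realize $(\widetilde E,\widetilde F)$ by prescribing its $\dag$-image and then applying $-\dag$: recall $(E,F)^\dag$ is a generating family over $O_q$ which $\psi_*$ and restriction to $O_{p>0}$ turn into $(E,F)|_{l}$, and which by hypothesis agrees with $\psi^*(H,G)$ (after stabilizing by $(V,q)$) near $v$ on the half-line of $O_q$ that corresponds under $W$ to the stalk. Glue to obtain $(\mathcal H,\mathcal G)$ over $O_q$ equal to $\psi^*(H,G)$ on that half-line and to $(E,F)^\dag$ on the complementary half-line — the part that feeds the two brim edges under $-\dag$ — and set $(\widetilde E,\widetilde F):=(\mathcal H,\mathcal G)^{-\dag}$, after checking $(\mathcal H,\mathcal G)$ lies in the range where $-\dag$ has its good properties. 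The stable-invertibility identity, applied in the order $(-\dag)$ then $\dag$ (valid by the same computation that gives $((E,F)^\dag)^{-\dag}\cong(E,F)\oplus(V,q)$), yields $(\widetilde E,\widetilde F)^\dag\cong(\mathcal H,\mathcal G)\oplus(V,q)$, hence $(\widetilde E,\widetilde F)|_{l}=\psi_*\bigl((\widetilde E,\widetilde F)^\dag\bigr)|_{O_{p>0}}\cong^{st}(H,G)|_{J_e}$; on the brim side $(\mathcal H,\mathcal G)$ agrees with $(E,F)^\dag$, so $-\dag$ together with Proposition~\ref{prop:localization} gives $(\widetilde E,\widetilde F)|_{J_{e'}}\cong(E,F)|_{J_{e'}}\oplus(V,q)$ for each brim $e'$, and $\gamma_{\widetilde E,\widetilde F}\subset\Sigma(\boldsymbol{\perp})$ as before.

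The main obstacle I expect is the bookkeeping around the two competing cutoffs: the ad hoc $\rho$ (respectively the gluing region in $O_q$) that I introduce, and the fixed cutoff $\phi$ hard-wired into $\dag$. One has to check, in each case, that after the gluing the set $O$ of Proposition~\ref{prop:localization} for the modification still lies where $\phi\equiv1$ — this is precisely the estimate $q\in supp\mbox{ }F_{nc}\Rightarrow|q|>1\Rightarrow|pr_{L^*}\circ\iota_{E,F_c}|<1\Rightarrow O\subset[-1,1]$ from the proof that $\gamma_{E^\dag,F^\dag}=W(\gamma_{E,F})$, now carried out for the glued family — and, relatedly, that the portion of $W(\gamma_{E,F})$ which $\dag$ records over $J_{l(v)}$ is disjoint from the image of the modification (this uses both the position of $J_{l(v)}$ inside $\boldsymbol{\perp}$ and the bound $|p|\le1$ on the brim forced by $\gamma_{E,F}\subset\Sigma(\boldsymbol{\perp})$, cf.\ Proposition~\ref{prop:chekanov}). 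The remaining points — transversality of the fibrewise differential after gluing, boundedness of the glued families, and inserting the correct trivial stabilizing summands $(V,q)$ so that fibre dimensions match on overlaps — are routine.
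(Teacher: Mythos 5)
Your proof follows the same route as the paper's own: the same dichotomy ($e$ a brim edge of $v$ versus $e=l(v)$), the same direct gluing of $(E,F)\oplus(V,q)$ with $(H,G)$ over $I_v$ in the brim case with Proposition~\ref{prop:localization} controlling the restriction to the stalk, and in the stalk case the same device of gluing the $\dag$-images and applying $-\dag$, with stable invertibility of $\dag$ giving the required equivalences. The cutoff bookkeeping you flag as the main obstacle is exactly what the paper's identity $\widetilde F^{\dag}|_{J_{l(v)}}=(\widetilde F_c)^{\dag}|_{J_{l(v)}}$ silently relies on, so the attempt is correct and essentially identical to the paper's argument.
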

\begin{proof}[Доказательство Леммы]
Либо $e$ является ножкой вершины $v$, либо не является. \\
1) Пусть $e$ не ножка вершины $v$. Тогда на $I_v$ заданы два производящих семейства
\begin{equation*}
    (E,F)\oplus(V,h)\mbox{ и } (H,G),
\end{equation*}
совпадающие на окрестности $K=Op(v)$. Склеим их в одно семейство $(\widetilde{E},\widetilde{F})$, совпадающее с $(H,G)$ на всем $J_e$ и с $(E,F)\oplus(V,h)$ на втором из ребер, образующих шляпку $v$. Осталось проверить утверждение об ограничении на $l(v)$. Из определения сразу следует, что 
\begin{equation*}
    \widetilde{F}^\dag|_{J_{l(v)}}= (\widetilde{F}_c)^\dag|_{J_{l(v)}}= (F+h)^\dag|_{J_{l(v)}}=F^\dag|_{J_{l(v)}}+h.
\end{equation*}
2) Пусть теперь  $d(v)=3$, $e=l(v)$. Рассмотрим семейство $(E,F)^\dag\oplus(V,h)$ и склеим его с $(H,G)$ также, как мы делали выше. Применяя к склеенному семейству $(\widetilde{H},\widetilde{G})$ оператор $-\dag$ получаем
\begin{equation*}
    (\widetilde{E},\widetilde{F}):=(\widetilde{H},\widetilde{G})^{-\dag}. 
\end{equation*}
Осталось проверить условия на ограничения. Для ограничений на ребра, входящие в шляпку 
\begin{equation*}
    \widetilde{F}|_{J_{e}}= \widetilde{G}^{-\dag}|_{J_{e}}=(\widetilde{G}_c)^{-\dag}|_{J_{e}} = (({F})^\dag)^{-\dag}|_{J_{e}} \cong^{st}F|_{J_{e}}\oplus (V,h).
\end{equation*}
Для ограничения на шляпку, получаем следующее тождество $  \widetilde{F}|_{J_{l(v)}}= (\widetilde{G}^{-\dag})^\dag|_{J_{l(v)}} \cong^{st}G|_{J_{l(v)}}.$
\end{proof}
Применяя эту Лемму к каждой из вершин $w$, соседних с $v$, определяем семейство $(H_w,G_w)$. Осталось заметить, что из нашей конструкции следует два утверждения:
\begin{itemize}
    \item ограничения семейств $(H_w,G_w)$ стабильно изоморфны друг-другу на пересечениях карт;
    \item и кроме того каждое семейство $(H_w,G_w)$ порождает $\delta_w$ на своей карте $U_w$.
\end{itemize} 
Первое утверждение достаточно проверять для пар вершин, хотя бы одна из которых соединена ребром с $v$. Действительно, если обе вершины не соседние с $w$, то мы просто стабилизируем соответствующие им производящие семейства, и они остаются стабильно эквивалентны. Если $w$ соседняя с $v$, то для нее это следствие Леммы. \\
С учетом предыдущего пункта второе утверждение достаточно проверять для вершин, не соседних с $v$. Для них это следствие конструкции, которое мы уже обсудили выше.\\
Таким образом, теорема о поднятии Гамильтоновой изотопии доказана. 
\end{proof}

И. Яковлев
\noindent\textsc{Center for Advanced Studies, Skoltech, Moscow, Russia,\\
International laboratory of mirror symmetry and automorphic forms\\  National Research University Higher School of Economics, Moscow, Russia}

\emph{E-mail}:\,\,\textbf{iayakovlev\_1@edu.hse.ru}
\end{document}